\documentclass{amsart}

\usepackage{lineno}
\usepackage[all]{xy}
\usepackage{amsmath}
\allowdisplaybreaks
\usepackage{amssymb}
\usepackage{array}
\usepackage{color}
\usepackage{verbatim}

\theoremstyle{plain}
\newtheorem{theorem}{Theorem}[section]
\newtheorem{lemma}[theorem]{Lemma}

\newtheorem{corollary}[theorem]{Corollary}

\theoremstyle{definition}
\newtheorem{definition}[theorem]{Definition}

\theoremstyle{remark}
\newtheorem{remark}[theorem]{Remark}

\def\Q{{\mathbb Q}}

\def\cat0{\mathrm{cat}_0}

\def\aut{\mathrm{aut}}

\def\ad{\mathrm{ad}}

\numberwithin{equation}{section}

\begin{document}
\author{Yang Bai}
\address{School of Mathematical Sciences, Nankai University, Tianjin 300071, P.R.China}
\email{1687694204@qq.com}

\author{Xiugui Liu}
\address{School of Mathematical Sciences and LPMC, Nankai University, Tianjin 300071, P.R.China}
\email{xgliu@nankai.edu.cn}

\author{Chaoshun Yang}
\address{School of Mathematical Sciences, Nankai University, Tianjin 300071, P.R.China}
\email{1296736185@qq.com}

\thanks{The second author was supported in part by the National Natural Science Foundation of China (Grant No. 12171165).}
\subjclass[2010]{55P10, 55P60, 55P62}
\date{}
\title{Realization of permutation modules as homology of CW-complexes}

\begin{abstract}
In this paper, we investigate the realizability problem, which asks whether prescribed group actions on graded modules can be realized by the groups of self-homotopy equivalences of CW-complexes acting on their homology. We provide a partial answer in the case of arbitrary groups and permutation modules concentrated in certain degrees. As a consequence, we realize every group as the group of self-homotopy equivalences of an $R$-local CW-complex with arbitrary prescribed connectivity, where $R$ may be chosen with $\rho(R)$ sufficiently large. In particular, this provides a complete answer to Kahn's realizability problem. 

\end{abstract}

\keywords{Self-homotopy equivalences, realizability problems, group actions, $R$-local homotopy theory, differential graded Lie $R$-algebras.}
\maketitle

\section{Introduction}

Realizability problems are classical in algebraic topology, especially those concerning homotopy groups, homology groups, and groups of self-homotopy equivalences. In \cite{C}, Costoya, Gomes and Viruel raised the following realizability problem of group actions, which can be considered as the dual of \cite[Question 1.3]{CV1}.

\noindent {\bf Question 1.1.} For a given group $G$, decide for which $\mathbb{Z}G$-modules $M$ there exists a topological space $X$ such that
\begin{enumerate}
    \item $H_{\ast }(X;\mathbb{Z})\cong M$;
    \item the group of self-homotopy equivalences $\mathcal{E}(X)$ is isomorphic to $G$;
    \item the action of $G$ on $M$ is equivalent to the canonical action of $\mathcal{E}(X)$ on $H_{\ast }(X;\mathbb{Z})$.
\end{enumerate}

They proved the existence of such $X$ as finite spaces when $G$ is a finite group and $M$ is a finitely generated generalized permutation $\mathbb{Z}G$-module \cite[Theorem A]{C}. Their approach was inspired by Chocano, Mor\'on and Ruiz~del~Portal, who effectively employed Alexandroff spaces in several realizability problems \cite{Al,A}, and relied on the following realization result of permutation representations as graphs.

\begin{theorem}\cite[Theorem 3.1]{CC}\label{qzy}
    Let $G$ be an arbitrary group, let $V$ be a set and let $\rho: G\rightarrow \mathrm{Sym}(V)$ be a (not necessarily faithful) permutation representation. Then, there is a simple, undirected graph $\mathcal{G}$ such that

    \begin{enumerate}
        \item $V\subset V(\mathcal{G})$ and $V$ is invariant under the $\mathrm{aut}(\mathcal{G})$-action;
        \item $\mathrm{aut}(\mathcal{G}) \cong G$;
        \item the restriction map $G\cong \mathrm{aut}(\mathcal{G}) \rightarrow \mathrm{Sym}(V)$ is precisely $\rho$;
        \item there is a faithful representation $\bar{\rho}: G\cong \mathrm{aut}(\mathcal{G}) \rightarrow  \mathrm{Sym}( V(\mathcal{G}) \backslash V)$ such that the restriction map $G\cong \mathrm{aut}(\mathcal{G}) \rightarrow \mathrm{Sym}(V(\mathcal{G}))$ is $\rho \oplus \bar{\rho}$.
    \end{enumerate}
\end{theorem}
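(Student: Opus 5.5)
We prove the statement by a Frucht-type construction: begin with a rigid graph encoding the regular action of $G$, and attach to it gadgets that encode $V$ and the map $\rho$.

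\textbf{Step 1: the group-encoding graph.} Choose a generating set $S$ of $G$; for instance $S=G\setminus\{e\}$, which is convenient for arbitrary infinite $G$. Form the Cayley colored digraph $\Gamma$ on vertex set $G$, with an arc $g\to gs$ of color $s$ for each $s\in S$. The automorphisms of $\Gamma$ that preserve colors and directions are exactly the left translations $x\mapsto hx$. Hence this group is $G$, acting freely and regularly on $G$.

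\textbf{Step 2: encoding $V$.} Make $G$ act on $V$ through $\rho$. Add the vertices of $V$ as a new vertex class, and join each $v\in V$ to each $g\in G$ with $g^{-1}v\in V_0$ by a new edge color $c_0$. Here $V_0$ is a set of orbit representatives, and we use one distinct color $c_i$ per orbit, with the representative $v_i$. Concretely, put an arc of color $c_i$ from $g$ to $g\cdot v_i$ for every $g\in G$. Left translation $h$ sends this arc to an arc from $hg$ to $hg\cdot v_i$, so $G$ still acts. The vertices of $V$ become ``sinks'' of the new colors. The set of $c_i$-arcs leaving $g$ contains exactly one arc. Therefore a color-preserving automorphism is determined by its restriction to $G$, which is a left translation $h$, and this forces $v\mapsto \rho(h)v$ on $V$. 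The non-faithfulness of $\rho$ causes no problem: the kernel of $\rho$ still acts nontrivially on the $G$-part.

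\textbf{Step 3: from colored digraphs to simple graphs.} Replace each colored arc of color $\alpha$ by a rigid, asymmetric gadget: a path of length depending on $\alpha$, with pendant paths of distinct lengths attached to mark the direction. Also attach distinct rigid "tags" to distinguish the classes $G$ and $V$. For instance, give vertices of $G$ and of $V$ distinctive degrees or pendant trees that appear nowhere else. Then every automorphism of the resulting simple graph $\mathcal G$ maps gadgets to gadgets of the same type. Consequently it induces a color-preserving automorphism of the colored digraph, and $\mathrm{aut}(\mathcal G)\cong G$. Conditions (1)--(3) then follow from Step 2.

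For (4), let $\bar\rho$ be the action on $V(\mathcal G)\setminus V$. This set contains $G$ with its regular action, which is free, so $\bar\rho$ is faithful. The restriction to $V(\mathcal G)$ is then $\rho\oplus\bar\rho$ by construction.

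\textbf{Main obstacle.} The delicate step is Step 3 when $G$ is infinite: there may be infinitely many colors, so gadgets of unbounded length are needed. We must verify that no automorphism can mix gadget vertices with vertices of $G$ or $V$. The first thing to try is to make all $G$- and $V$-vertices have infinite degree or a distinctive local structure, to keep gadget interiors of degree at most $3$, and to check the distinctness of lengths carefully.
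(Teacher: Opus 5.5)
The paper does not prove this statement; it is quoted from \cite{CC}, so I am judging your argument on its own terms. Steps~1 and~2 are sound, provided you use one colour $c_i$ per $G$-orbit as in your corrected version. A single colour $c_0$ would already fail for trivial $G$ and $|V|=2$, because the two points could be swapped. With one colour per orbit, a colour-preserving automorphism is exactly $g\mapsto hg$, $v\mapsto\rho(h)v$, and (1)--(4) hold for the coloured digraph.

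The genuine gap is in Step~3, and it goes beyond the verification you postpone. You encode each colour by a \emph{finite} gadget (a path with pendant paths of prescribed lengths), and distinct colours must receive non-isomorphic gadgets. But there are only countably many finite graphs up to isomorphism. The number of colours is $|S|$ plus the number of orbits of $\rho$, and the statement allows this to be any cardinal:
\begin{itemize}
\item if $G$ is uncountable, every generating set has cardinality $|G|$, since a countable set generates a countable subgroup;
\item for trivial $G$ and uncountable $V$ there are uncountably many orbits.
\end{itemize}
In these cases two distinct colours necessarily get isomorphic gadgets, and merging colours creates extra automorphisms. For trivial $G$ the two corresponding points of $V$ can be swapped. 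For $G=\mathbb{Z}_2\times\mathbb{Z}_2$ with the colours of two generators merged, the group automorphism exchanging them becomes a graph automorphism. So $\mathrm{aut}(\mathcal{G})\cong G$ fails, and ``gadgets of unbounded length'' cannot repair this.

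What is missing is a way to encode an arbitrary set of colours rigidly, which forces you to use infinite rigid graphs, as in the classical proofs (de Groot, Sabidussi) that every group is the automorphism group of a graph. One option is a rigid palette graph $\mathcal{R}$ of large enough cardinality. Take a well-ordered set $(\kappa,<)$ with $\kappa$ an infinite cardinal; its only order-automorphism is the identity. Replace every arc by one fixed asymmetric finite gadget, so that the original vertices are exactly the vertices of infinite degree. Then attach every arc-gadget of colour $\alpha$ to a designated vertex $r_\alpha$ of $\mathcal{R}$. Alternatively, use pairwise non-isomorphic infinite rigid gadgets, such as copies of distinct ordinals.

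Either way, you must prove that every automorphism of the final graph preserves $\mathcal{R}$, the class $G$, the class $V$ and the arc-gadgets. Rigidity then fixes all colours, and Step~2 finishes the proof. Even for countable $G$ and $V$, where your finite gadgets do suffice, this preservation argument is the real content of Step~3, and you only sketch it. Degrees do not separate $G$ from $V$: a vertex $v\in V$ has degree $|\mathrm{Stab}_G(v)|$ in the digraph, which may be infinite. So the tags must be designed so that they cannot be mistaken for pieces of arc-gadgets, and you have to check this explicitly.
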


In addition to the results mentioned above, relevant realizability problems have also been investigated within the category of simply-connected CW-complexes \cite{MB,CV0,CV1,CV,DK1}. However, such $X$ in Question 1.1 may fail to exist in this category even when $G$ is a finite group and $M$ is a finitely generated generalized permutation $\mathbb{Z}G$-module. In particular, a simple counterexample is given by taking $M=H_{\ast}(S^2)$, for which $\mathcal{E}(X)\cong \mathbb{Z}_2$ is uniquely determined. Therefore, we focus on a weaker version of Question 1.1 in the category of simply-connected CW-complexes: rather than realizing $M$ as the homology of $X$ in all degrees, we realize it only in finitely many consecutive degrees.

Let $R\subset \mathbb{Q}$ be a subring, and let $\rho(R)$ denote the least prime which is not invertible in $R$ (if none exists put $\rho(R)=\infty$). Recall that a {\it permutation $RG$-module} is a free $R$-module on a $G$-set whose $RG$-module structure is defined by $R$-linearly extending the $G$-action on the $G$-set. We provide a partial answer in the particular case where $G$ is an arbitrary group and $M$ is a permutation $RG$-module concentrated in finitely many consecutive degrees. 

\begin{theorem}\label{fi}
    Let $k\geq 2$ be an integer and $G$ an arbitrary group. Let $\left\{M_i\right\}_{i=1}^{k}$ be a finite collection of permutation $RG$-modules. Suppose that there exists an integer $n$ satisfying
    $$n\geq 3k+5, \ \  \rho(R) >20n^3-33n^2+8n+4. $$
    Then, there exists an $(8n^2-6n+1)$-connected finite-dimensional CW-complex $X$ such that
    \begin{enumerate}
        \item $\mathcal{E}(X_{R})\cong G$;
        \item $H_{k(8n^2-2n-2)+i+1}(X_{R};\mathbb{Z})\cong M_i$ for $i=1,\dots, k$;
        \item the action of $\mathcal{E}(X_{R})$ on $H_{k(8n^2-2n-2)+i+1}(X_{R};\mathbb{Z})$ is equivalent to the action of $G$ on $M_i$ for $i=1,\dots, k$.
    \end{enumerate}
    Here $X_{R}$ denotes the $R$-localization of $X$. In particular, $X$ can be chosen to be a finite CW-complex when $G$ is a finite group and $\left\{M_i\right\}_{i=1}^{k}$ is a collection of finitely generated $RG$-modules.
\end{theorem}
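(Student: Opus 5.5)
Following the approach of Costoya--Viruel and their successors, the plan is to encode the graph of Theorem~\ref{qzy} into an $R$-local homotopy type through a Quillen-type model. Because $\rho(R)$ is large compared to the dimensions involved, we would use the $R$-local version of Quillen's theory (Anick's differential graded Lie $R$-algebras). In this range homotopy classes of maps between $R$-local complexes correspond to homotopy classes of dgl morphisms between free models $(\mathbb{L}_R(V),\partial)$. The bound $\rho(R)>20n^3-33n^2+8n+4$ is exactly the condition that the dimension of the complex, roughly of order $(8n^2)\cdot(\text{const}\cdot n)$, stays below the Anick range $\rho(R)$ times the connectivity.

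First, let $V_i$ be the $G$-set underlying $M_i$, and set $V=\bigsqcup_{i=1}^k V_i$ with the induced permutation representation $\rho$. Theorem~\ref{qzy} gives a graph $\mathcal{G}$ with $\mathrm{aut}(\mathcal{G})\cong G$ acting on $V\subset V(\mathcal{G})$ through $\rho$, and acting faithfully on the remaining vertices. We want the vertices in different $V_i$ to be distinguishable. To arrange this, we attach to each vertex of $V_i$ a rigid gadget that depends on $i$, such as a path of length $i$ ending in a characteristic subgraph; this keeps the automorphism group equal to $G$ and makes every $V_i$ invariant.

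Next we build the Costoya--Viruel-type dgl $L_{\mathcal{G}}$. It is generated by elements $x_1,x_2$ in degrees tied to $n$, by $y_v,z_v$ for each vertex $v$, and by $w$, with differentials of the form
\[
\partial z_v = [x_1,y_v] + \sum_{(v,u)\in E(\mathcal{G})}[x_2,[x_1,y_u]]+\cdots .
\]
The degrees are chosen as polynomials in $n$ so that the top generators sit in degree of order $8n^2$. This matches the connectivity $8n^2-6n+1$. To realize the modules, we add for each $v\in V_i$ a generator $t_v$ of degree $k(8n^2-2n-2)+i$ that is a cycle, or is attached through a bracket with a low generator so that it is ``decorated'' by $v$. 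Under the dgl-to-space dictionary, $t_v$ contributes a free $R$-summand in homology degree $k(8n^2-2n-2)+i+1$. The hypothesis $n\geq 3k+5$ ensures these $k$ consecutive degrees fall strictly between the degrees of the structural generators. It also ensures they create no new homotopy classes of morphisms: degree reasons force any endomorphism to send $t_v$ to $\pm t_{\sigma(v)}$ plus decomposables, and the decomposables die in homology. We then take $X$ to be the finite-dimensional CW-complex whose $R$-localization has this dgl as its model. When $G$ and all $V_i$ are finite, every set of generators is finite, so $X$ is finite. The homology computation is immediate from the indecomposables of the minimal model, giving (2).

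The main obstacle is computing $\mathcal{E}(X_R)$ as the group of homotopy classes of dgl automorphisms. Here the strategy is the usual two-step argument. The first step is to show that any automorphism $f$ satisfies $f(x_j)=\lambda_j x_j$ and $f(y_v)=\lambda y_{\sigma(v)}$ for a map $\sigma$ of vertices. This uses degree arguments, valid because all relevant degrees are below $\rho(R)$-times bounds, so that no unexpected brackets arise. The second step is to show that the differential on the $z_v$ forces $\sigma\in\mathrm{aut}(\mathcal{G})$ and all $\lambda=1$. We must also check that any automorphism inducing $\sigma=\mathrm{id}$ is homotopic to the identity. This step needs a careful bookkeeping of degrees, and it is where the precise polynomials in $n$ enter. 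Granting it, $\mathcal{E}(X_R)\cong\mathrm{aut}(\mathcal{G})\cong G$, which is (1). For (3), the action on the classes $[t_v]$ is $v\mapsto\sigma(v)$, which by Theorem~\ref{qzy}(3) is exactly $\rho$ restricted to $V_i$, so it recovers the $G$-action on $M_i$.
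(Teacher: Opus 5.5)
Your outline leaves the substance of the theorem unproved: the explicit dgl whose homotopy classes of self-equivalences are exactly $\mathrm{aut}(\mathcal{G})$ is covered by ``granting it''. Worse, the one concrete mechanism you give for the modules makes the conclusion false.

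Suppose each $t_v$ ($v\in V_i$) is a cycle that occurs in no differential. Take the map that is the identity on every other generator and sends $t_v\mapsto -t_v$ (or $t_v\mapsto t_v+t_u$ with $u\in V_i$, or $t_v\mapsto t_v+c$ for a decomposable non-bounding cycle $c$ of degree $|t_v|$). It is a dgl automorphism, and it acts on dgl homology differently from every graph-induced map. So $\mathcal{E}(X_R)$ is strictly larger than $G$.

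Degree reasons cannot separate $t_v$ from the other $t_u$ of the same degree. Decomposables vanish only in the linearized complex, which computes $H_*(X_R)$. They do not vanish in the dgl homology, which is what controls homotopy classes of maps. In the paper, for instance, the nonzero elements of $\mathbb{L}^{k-i,i}(y,z)$ are non-bounding cycles in degree $|a_v|$ (for $h_v=i$), and one has to prove that the decomposable part $A_v$ of $f(a_v)$ vanishes. Your ``decorated'' alternative is not specified, and making it rigid is precisely the hard part.

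The paper makes the module generators the vertex generators themselves. Every vertex $v$ of $\mathcal{G}$ gets a cycle $a_v$ of degree $k|y|+h_v$, with auxiliary vertices having $h_v=0$. The degree thus records which $V_i$ a vertex lies in, and no gadgets are needed, since Theorem \ref{qzy}(3) already makes each $V_i$ invariant. Each edge gets a generator $b_{(v,w)}$ with
\[
db_{(v,w)}=(\mathrm{ad}\,a_v)^3(\cdots)(x_2)+(\mathrm{ad}\,a_v)(\mathrm{ad}\,a_w)(\cdots)([x_2,t]),
\]
and the hypothesis $n\geq 3k+5$ is what keeps the exponents of $\mathrm{ad}\,z$ nonnegative. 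One further generator $b$ has a five-term differential that forces all scalars on $x_1,x_2,y,z,t$ to equal $1$. Comparing coefficients of the linearly independent monomials of Lemma \ref{lm3} then gives $f(a_v)=a_{\sigma(v)}$ exactly, with $\sigma\in\mathrm{aut}(\mathcal{G})$.

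You are also missing the device that turns ``agrees up to cycles'' into a homotopy. The paper attaches the free modules $\Phi_{|b|+s}$ to kill all homology in degrees $|b|,\dots,|b|+3n$. Every remaining discrepancy in $f(b)$, $f(b_{(v,w)})$ and $f(\phi_{s,\alpha})$ is then a boundary, and Lemma \ref{ho} together with Theorem \ref{HEP} gives $f\simeq f_\sigma$. Without such a step, homology in the top degrees could produce further self-equivalences.
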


\begin{remark}
    Theorem \ref{fi} deals with the case $k\geq 2$. The case $k=1$ can be settled by applying Theorem \ref{fi} to the arbitrary group $G$ and the collection of permutation $RG$-modules $\left\{M_i\right\}_{i=1}^{2}=\{M_1,R\}$ with the trivial $RG$-module $R$.
\end{remark}

The realizability problem proposed by Kahn \cite{DK1}, which seeks to characterize all groups that appear as the group of self-homotopy equivalences of simply-connected CW-complexes, has a long history. Costoya and Viruel solved the case of finite groups \cite{CV0}, and generalized their earlier results to a broad class of orthogonal groups \cite{CV1}.
Subsequently, Benkhalifa \cite{MB} claimed a complete answer to Kahn's realizability problem for arbitrary groups, with a revised version \cite{MB1} on arXiv. However, after a close examination we reveal some minor miscalculations in \cite{MB,MB1}: the arguments depend on the non-exactness of certain Lie monomials, whereas some of them are in fact boundaries. For instance, using the notation from \cite{MB,MB1}, the bracket
$$Z=\Big[(\ad w_1)^5(w_3),\big[(\ad w_2)^3(w_3),(\ad w_2)^2([w_3,w_5])\big]\Big]$$
is claimed to be non-exact (\cite[Lemma 3.3]{MB} and \cite[Lemma 3.5]{MB1}), but a direct computation shows that it is a boundary: 
$$Z=\partial\Big(\frac{1}{2}\Big[(\ad w_1)^5(w_3),\big[(\ad w_2)^3(w_3),(\ad w_4)([w_3,w_5])\big]\Big]\Big).$$

The construction in this paper is motivated by the general framework of \cite{MB,MB1}, but we introduce essential modifications to avoid the issues discussed above. As an immediate consequence of Theorem \ref{fi}, the following corollary indeed provides a corrected and complete answer to Kahn's realizability problem in the category of CW-complexes with arbitrary prescribed connectivity. 

\begin{corollary}\label{co1}
    For every group $G$ and positive integer $N$, there exists a positive integer $P_{G,N}$ such that, for every subring $R\subset \Q$ with $\rho(R)>P_{G,N}$, there exists an $N$-connected finite-dimensional CW-complex $X$ satisfying $\mathcal{E}(X_{R})\cong G$, where $X_{R}$ is the $R$-localization of $X$. In particular, $X$ can be chosen to be a finite CW-complex when $G$ is a finite group.
\end{corollary}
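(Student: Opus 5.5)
We deduce Corollary \ref{co1} directly from Theorem \ref{fi} by choosing the modules trivially and the parameters large enough. Fix a group $G$ and a positive integer $N$. Take $k=2$ and let $M_1=M_2=R$ be the trivial permutation $RG$-module, i.e. the free $R$-module on the one-point $G$-set. Any $n$ with $n\geq 3k+5=11$ and $8n^2-6n+1\geq N$ will do, and we fix such an $n$, for instance the least one. Note that $n$ depends only on $N$, not on $G$ or $R$.

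Next we set
$$P_{G,N}=20n^3-33n^2+8n+4,$$
which is a positive integer since $n\geq 11$. If $R\subset\Q$ is any subring with $\rho(R)>P_{G,N}$, then all the hypotheses of Theorem \ref{fi} hold for $G$, the collection $\{M_1,M_2\}=\{R,R\}$, and this $n$. Theorem \ref{fi} therefore gives an $(8n^2-6n+1)$-connected finite-dimensional CW-complex $X$ with $\mathcal{E}(X_R)\cong G$. Because $8n^2-6n+1\geq N$, the complex $X$ is in particular $N$-connected. The homology conditions (2) and (3) are not needed here and can be discarded.

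For the finite case, suppose $G$ is finite. The modules $M_1=M_2=R$ are finitely generated $RG$-modules, so the final clause of Theorem \ref{fi} lets us take $X$ to be a finite CW-complex.

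There is no real obstacle in this argument. The only points to check are that the choice of $n$ can be made independently of $R$, so that $P_{G,N}$ is well defined before $R$ is chosen, and that the trivial module $R$ counts as a permutation $RG$-module. Both hold. In fact the resulting bound depends only on $N$, so $P_{G,N}$ may be taken uniform in $G$.
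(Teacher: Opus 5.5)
Your proposal is correct and follows essentially the same route as the paper. The paper also takes $k=2$, $n\geq 11$ with $8n^2-6n+1\geq N$, $P_{G,N}=20n^3-33n^2+8n+4$, and $\{M_1,M_2\}=\{R,R\}$ in Theorem \ref{fi}, and your remark that the bound depends only on $N$ is consistent with that choice.
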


The paper is organized as follows. We start by recalling basic facts about graphs and Anick's $R$-local homotopy theory in Section 2. Then in Section 3, we provide the construction of a differential graded Lie $R$-algebra with a given graph. In Section 4, we investigate the self-homotopy equivalences of the constructed differential graded Lie $R$-algebra with a series of lemmas, and finally prove Theorem \ref{fi} and Corollary \ref{co1}.

\section{Preliminaries}

For basic facts about graphs, we refer to \cite{GT}. Only simple undirected graph $\mathcal{G}=(V(\mathcal{G}),E(\mathcal{G}))$ are considered. This means they do not have loops and they are not directed, that is, for any vertex $v$ in $V(\mathcal{G})$, the edge $(v,v)$ is not in $E(\mathcal{G})$, if an edge $(v,w)$ is in $E(\mathcal{G})$, then $(w,v)$ is also in $E(\mathcal{G})$. The automorphism group of a graph $\mathcal{G}$ is denoted by $\aut (\mathcal{G})$.

We refer to \cite{AN1,AN2} for basic facts about Anick's $R$-local homotopy theory. Let $R\subset \mathbb{Q}$ be a subring, and let $\rho(R)$ denote the least prime which is not invertible in $R$ (if none exists put $\rho(R)=\infty$).

A {\it graded Lie $R$-algebra} is a graded $R$-module $L=\oplus_{i=-\infty}^{\infty} L_i$, together with a bilinear pairing $[\ ,\ ]: L_i\otimes L_j\rightarrow  L_{i+j}$ satisfying the graded skew-symmetry and the graded Jacobi identity. The degree of a homogeneous element $x$ is denoted by $|x|$, and the derivation $\ad x$ is defined by $(\ad x)(y)=[x,y],\ \forall y\in L$. If $L_{i}=0$ for $i<r$, we call $L$ {\it $r$-reduced}.

The free $R$-module on the graded set $S$ is denoted by $\langle S \rangle$, and $S$ is called its {\it free basis}. If a graded Lie $R$-algebra $L$ is free on the free $R$-module $W$, we write $L=\mathbb{L}(W)$. Note that the universal enveloping algebra of $\mathbb{L}(W)$ is exactly the tensor algebra $TW$.

A {\it differential graded Lie $R$-algebra} (dgl for short) is a $1$-reduced graded Lie $R$-algebra $L$ together with a derivation $d$ of degree $-1$ whose square is zero. Denote by $DGL_r^m(R)$ the category of $r$-reduced differential graded Lie $R$-algebras which are free as Lie $R$-algebras and generated in degree at most $m$.
\begin{definition}[\cite{AN1}, p. 426]
    A free dgl over $R$ is called {\it $r$-mild} if it is generated as a Lie algebra by the range of dimensions $r$ through $r\rho(R)-1$.
\end{definition}

If $(\mathbb{L}(W),d)$ is an $r$-mild dgl, its {\it Tanr\'e cylinder} $I(\mathbb{L}(W),d)$ is constructed as follows \cite{AN1}:
$$I(\mathbb{L}(W),d)=(\mathbb{L}(W\oplus sW\oplus DsW),D),\ \ (sW)_i\cong W_{i-1},\ \ W\cong DsW,$$
where the differential $D$ is given by
$$ D(x)=dx, \ \ D(sx)=Dsx, \ \ D(Dsx)=0, \ \ \mathrm{for }\ x\in W.$$
Define a derivation $S$ of degree $+1$ on $(\mathbb{L}(W\oplus sW\oplus DsW),D)$ by setting
$$S(x)=sx,\ \ S(sx)=S(Dsx)=0, \ \ \mathrm{for }\ x\in W.$$
A homotopy between two dgl morphisms $\varphi_1,\varphi_2: (\mathbb{L}(W),d) \rightarrow (L,d^\prime)$ is a dgl morphism
$$F:(\mathbb{L}(W\oplus sW\oplus DsW),D) \rightarrow (L,d^\prime)$$
such that $F(x)=\varphi_1(x)$ and $F\circ e^{[D,S]}(x)=\varphi_2(x)$ for $x\in W$. The group of self-homotopy equivalences of an $r$-mild dgl $(\mathbb{L}(W),d)$ is denoted by $\mathcal{E}(\mathbb{L}(W))$.

The homotopy theory of $r$-mild dgl's is well-behaved. Our proof relies on the homotopy extension property \cite[p. 426]{AN1}.
\begin{theorem}[Homotopy extension property]\label{HEP}
    Let $(\mathbb{L}(W),d)\rightarrow (\mathbb{L}(W\oplus U),d)$ be an inclusion of $r$-mild dgl's. Let $f:(\mathbb{L}(W\oplus U),d)\rightarrow (L,d^\prime)$ be a morphism between $r$-mild dgl's, and let $F:I\mathbb{L}(W)\rightarrow L$ be a homotopy between $f|_{\mathbb{L}(W)}$ and some other morphism $g$. Then there is an extension $\widetilde{F}:I\mathbb{L}(W\oplus U)\rightarrow L$ of $F$ to a homotopy between $f$ and an extension $\widetilde{g}$ of $g$.
\end{theorem}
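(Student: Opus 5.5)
The plan is to extend the homotopy $F$ by the simplest possible choice on the new generators and to let the second endpoint $\widetilde{g}$ be whatever this choice forces. Write $I\mathbb{L}(W\oplus U)=(\mathbb{L}(W\oplus U\oplus sW\oplus sU\oplus DsW\oplus DsU),D)$. This contains $I\mathbb{L}(W)$ as the sub-dgl generated by $W\oplus sW\oplus DsW$, because $D$ and $S$ preserve it. I define $\widetilde{F}$ on free generators by
$$\widetilde{F}|_{W\oplus sW\oplus DsW}=F,\qquad \widetilde{F}(u)=f(u),\quad \widetilde{F}(su)=0,\quad \widetilde{F}(Dsu)=0\quad (u\in U),$$
and extend it as a morphism of graded Lie algebras, which is possible by freeness.

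First I check that $\widetilde{F}$ commutes with differentials. It suffices to check this on generators, since both sides are derivations along $\widetilde{F}$.
\begin{itemize}
\item On $I\mathbb{L}(W)$ this holds because $F$ is a dgl map.
\item For $u\in U$, $Du=du\in\mathbb{L}(W\oplus U)$. The restriction of $\widetilde{F}$ to $\mathbb{L}(W\oplus U)$ is $f$: on $W$ we have $F(x)=f(x)$ since $F$ starts at $f|_{\mathbb{L}(W)}$, and on $U$ it is $f$ by definition. Hence $\widetilde{F}(Du)=f(du)=d'f(u)=d'\widetilde{F}(u)$.
\item For $su$ we have $\widetilde{F}(D(su))=\widetilde{F}(Dsu)=0=d'\widetilde{F}(su)$.
\item For $Dsu$ both sides are $0$.
\end{itemize}
So $\widetilde{F}$ is a dgl morphism, it extends $F$, and $\widetilde{F}(x)=f(x)$ for all $x\in W\oplus U$.

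Next I set $\widetilde{g}:=\widetilde{F}\circ e^{[D,S]}|_{\mathbb{L}(W\oplus U)}$. By the definition of homotopy, $\widetilde{F}$ is then a homotopy from $f$ to $\widetilde{g}$, provided $e^{[D,S]}$ makes sense and is a dgl map. Since $[D,S]$ preserves $I\mathbb{L}(W)$, on $\mathbb{L}(W)$ we get $\widetilde{g}=F\circ e^{[D,S]}=g$, so $\widetilde{g}$ extends $g$. Also $\theta=[D,S]=DS+SD$ is a degree-$0$ derivation, and $[D,\theta]=D^2S-SD^2=0$. Hence, once $e^\theta$ is defined, it is a dgl automorphism, and $\widetilde{g}$ is a dgl morphism.

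The only real point, and the step where $r$-mildness is used, is that $e^{\theta}=\sum_m \theta^m/m!$ is defined over $R$. Compute $\theta$ on generators:
\begin{itemize}
\item $\theta(x)=Dsx+s\,dx$ for $x\in W\oplus U$;
\item $\theta(sx)=D(0)+S(Dsx)=0$;
\item $\theta(Dsx)=0$.
\end{itemize}
Since $\theta$ is a derivation, apply it to a bracket monomial in letters from $W\oplus U$. Each application replaces one such letter by a term containing one letter from $s(W\oplus U)\oplus Ds(W\oplus U)$, plus brackets of letters of $W\oplus U$ coming from $s\,dx$. Those letters are then killed, because $\theta$ vanishes on the $s$- and $Ds$-letters.

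I would formalize this with a filtration by the number of letters from $W\oplus U$ weighted by degree. Each letter has degree at least $r$. A generator $x$ of degree at most $r\rho(R)-1$ therefore gives $\theta^m(x)=0$ once $m\ge\rho(R)$. More precisely, $\theta^m(x)=0$ as soon as $mr$ exceeds $|x|$. So only $m<\rho(R)$ occurs, and all the needed $m!$ are invertible in $R$. Checking this bound carefully, in particular the bookkeeping that $s\,dx$ does not regenerate enough letters for $\theta$ to continue indefinitely, is the step I expect to require the most care. Once it is done, $e^{\theta}$ is a well-defined automorphism, defined degreewise as a finite sum, and the theorem follows.
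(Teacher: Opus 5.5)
Your argument is correct. The paper gives no proof of this statement. It is quoted from Anick \cite[p.~426]{AN1}, so your proposal is a self-contained replacement for a citation rather than a variant of an argument in the text.

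What your route makes visible is that, for this particular cylinder, the homotopy extension property is essentially formal. $I\mathbb{L}(W\oplus U)$ is the free product of $I\mathbb{L}(W)$ with $\mathbb{L}(U\oplus sU\oplus DsU)$, and $D(su)=Dsu$ is itself a free generator. So the only constraints on $\widetilde{F}$ at the new generators are $\widetilde{F}(u)=f(u)$ and $\widetilde{F}(Dsu)=d'\widetilde{F}(su)$; your choice $0,0$ is one instance. The second endpoint is then forced, and it extends $g$ because $[D,S]$ preserves $I\mathbb{L}(W)$. The citation buys brevity and places the statement in Anick's general framework, but it hides how little is going on.

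On the step you flagged: well-definedness of $e^{[D,S]}$ over $R$ on the cylinder of the $r$-mild dgl $\mathbb{L}(W\oplus U)$ is already presupposed by the paper's definition of homotopy, so you may take it as given. Your degree count still proves it cleanly if you track the letters of $s$-type. The derivation $\theta=[D,S]$ vanishes on $s(W\oplus U)\oplus Ds(W\oplus U)$. It sends each $x\in W\oplus U$ to $Dsx+S(dx)$, a sum of monomials each containing exactly one letter of $s$-type. Hence $\theta^m(x)$ is a sum of monomials with exactly $m$ letters from $s(W\oplus U)\oplus Ds(W\oplus U)$, each of degree at least $r$, so $\theta^m(x)=0$ once $mr>|x|$. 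The term $S(dx)$ cannot ``regenerate'' anything, because its $s$-letter is already counted. Your degree-weighted count of letters from $W\oplus U$ is equivalent: it drops by at least $r$ with each application of $\theta$.

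One small caveat: ``all the needed $m!$ are invertible'' holds termwise only in degrees below $r\rho(R)$, not for arbitrary high-degree elements. That suffices here. To check that $\widetilde{g}$ is a dgl morphism you only evaluate $e^{\theta}$ on generators and their differentials, and use its multiplicativity there. In higher degrees, $e^{\theta}$ is defined by multiplicative extension, using $\frac{\theta^m}{m!}[a,b]=\sum_{i+j=m}\big[\frac{\theta^i}{i!}a,\frac{\theta^j}{j!}b\big]$.
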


The following lemma treats a simple but important case in which two dgl morphisms are homotopic (cf. \cite[Lemma 3.4]{AN1} and \cite[Lemma 2.1]{MB}).
\begin{lemma}\label{ho}
    Let $(\mathbb{L}(W_{\leq k}),d)$ and $(\mathbb{L}(W^\prime),d^\prime)$ be $r$-mild dgl's. Let
    $$\varphi_1,\varphi_2: (\mathbb{L}(W_{\leq k}),d) \rightarrow (\mathbb{L}(W^\prime),d^\prime)$$
    be two morphisms satisfying
    \begin{align*}
    \varphi_2(x)&=\varphi_1(x), \ \ x\in W_{<k},  \\
    \varphi_2(z)&=\varphi_1(z)+ d^\prime m_z, \ \ z\in W_{k},
    \end{align*}
    for some $m_z\in \mathbb{L}(W^\prime)$. Then $\varphi_1$ and $\varphi_2$ are homotopic.
\end{lemma}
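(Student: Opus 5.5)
We will write down an explicit homotopy $F\colon I(\mathbb{L}(W_{\leq k}),d)\to(\mathbb{L}(W'),d')$ from $\varphi_1$ to $\varphi_2$. Since $I\mathbb{L}(W_{\leq k})$ is free on $W_{\leq k}\oplus sW_{\leq k}\oplus DsW_{\leq k}$, a dgl morphism is determined by its values on generators, subject only to commuting with differentials.

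\textbf{Definition of $F$.} We set
\begin{align*}
F(x)&=\varphi_1(x) && \text{for } x\in W_{\leq k},\\
F(sx)&=0,\quad F(Dsx)=0 && \text{for } x\in W_{<k},\\
F(sz)&=m_z,\quad F(Dsz)=d'm_z && \text{for } z\in W_k.
\end{align*}
Here we choose the $m_z$ on an $R$-basis of $W_k$ and extend linearly. Compatibility with differentials is checked on generators:
\begin{itemize}
\item $F(Dx)=F(dx)=\varphi_1(dx)=d'\varphi_1(x)$;
\item $F(D sz)=d'm_z=d'F(sz)$;
\item $d'F(Dsz)=d'd'm_z=0=F(DDsz)$;
\item the cases $x\in W_{<k}$ hold trivially.
\end{itemize}
So $F$ is a dgl morphism with $F|_{\mathbb{L}(W_{\leq k})}=\varphi_1$.

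\textbf{Computing $F\circ e^{[D,S]}$ on $W_{\leq k}$.} Put $\theta=[D,S]=DS+SD$, a degree-$0$ derivation; the mildness hypothesis is exactly what makes $e^{\theta}$ well defined over $R$, following Anick.

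For $z\in W_k$ we have $\theta(z)=Dsz+S(dz)$. Now $dz\in\mathbb{L}(W_{<k})$, and $S$ is a derivation sending $W_{<k}$ into $sW_{<k}$, so $S(dz)$ lies in the ideal generated by $sW_{<k}$. Likewise $\theta(x)\in$ the ideal generated by $sW_{<k}$ and $DsW_{<k}$ for $x\in W_{<k}$. By degree reasons, and because $\theta(Dsz)=D S Dsz+SDDsz=0$, the iterates $\theta^j(z)$ for $j\geq2$ lie in the ideal $J$ generated by $sW_{<k}\oplus DsW_{<k}$.

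$F$ kills $J$, since it kills those generators. Hence
$$F e^{\theta}(z)=F(z)+F(Dsz)=\varphi_1(z)+d'm_z=\varphi_2(z),$$
and similarly $Fe^{\theta}(x)=\varphi_1(x)=\varphi_2(x)$ for $x\in W_{<k}$. So $F$ is a homotopy from $\varphi_1$ to $\varphi_2$.

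\textbf{Main obstacle.} The hard step is verifying that $\theta^j(z)\in J$ for all $j\geq 2$, including the second-order term $\tfrac12\theta^2(z)$. This needs care with the derivation identities: $S$ and $D$ applied to brackets produce mixed terms. The claim to establish is that $J$ together with $\mathbb{L}(W_{<k})$ is stable in a way that lets the induction close. First check that $\theta$ maps $\mathbb{L}(W_{<k}\oplus sW_{<k}\oplus DsW_{<k})$ into $J$ and preserves $J$. Then note that $\theta(z)-Dsz\in J$ and $\theta(Dsz)=0$, which gives the result. The denominators $1/j!$ are handled by mildness, or one may cite Anick's Lemma 3.4 directly.
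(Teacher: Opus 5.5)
Your proof is correct. It is the standard explicit-homotopy argument behind the references the paper cites for this lemma (Anick's Lemma 3.4 and Benkhalifa's Lemma 2.1); the paper itself gives no proof. You set $F=\varphi_1$ on $W_{\leq k}$, $F=0$ on $sW_{<k}\oplus DsW_{<k}$, $F(sz)=m_z$ and $F(Dsz)=d'm_z$ on a basis of $W_k$.

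The step you flag as the main obstacle is immediate. Since $\theta=[D,S]$ vanishes on $sW\oplus DsW$, the ideal $J$ they generate is automatically $\theta$-stable. Moreover $\theta(z)-Dsz=S(dz)\in J$ because $dz\in\mathbb{L}(W_{<k})$, and $\theta(Dsz)=0$, so all $\theta^j(z)$ with $j\geq 2$ lie in $J\subseteq\ker F$.
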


Let $CW_r^{m+1} $ denote the category of $r$-connected CW-complexes of dimension $\leq m+1$ having trivial $r$-skeleton, with arbitrary pointed maps among them. Define the homotopy category for $CW_r^{m+1} $ by collapsing $R$-local homotopy classes of maps, and likewise for $DGL_r^m (R)$. Under the condition $m<\min(r+2\rho(R)-3, r\rho(R)-1)$, Anick established an equivalence between the homotopy categories of $CW_r^{m+1}$ and $DGL_r^m (R)$ (see \cite[Section 8]{AN1} and \cite[Theorem 1]{AN2}).

\section{From graphs to Lie algebras}

Fix the integers $k,n$ with
$$k\geq2,\ \ n\geq 3k+5,$$
and a subring $R\subset \mathbb{Q}$ satisfying
$$\rho(R) > 20n^3-33n^2+8n+4.$$
According to the definition, for $i=1,2,\dots,k$, the permutation $RG$-module $M_i$ is a free $R$-module on a $G$-set, denoted by $V_i$. Applying Theorem \ref{qzy} to $(\cup_{i=1}^{k} V_i )\cup \{\ast\}$, there is a simple, undirected graph $\mathcal{G}=(V(\mathcal{G}),E(\mathcal{G}))$ with $V=(\cup_{i=1}^{k} V_i) \cup \{\ast\} \subset V(\mathcal{G})$ satisfying conditions (1)--(4). Here the extra element $\{\ast\}$ is fixed by $G$, and ensures that the $G$-set $V$ in Theorem \ref{qzy} is non-empty when every $M_i$ is zero. According to the construction in \cite[Section 3]{CC}, we can further assume that $\mathcal{G}$ is a connected graph with more than one vertex. Set $V_0 =V(\mathcal{G}) \backslash (\cup_{i=1}^{k} V_i)$. For convenience, write $h_v=i$ if $v\in V_i\subset V(\mathcal{G})$ for $i=0,1,\dots,k$. Each $V_i$ is invariant under $\aut (\mathcal{G})$ because the restricted action is the prescribed action on $V$; hence $h_{\sigma(v)}=h_v$ for every $\sigma\in \aut (\mathcal{G})$. 

We define a differential graded Lie $R$-algebra $\mathcal{L}^{\mathcal{G},0}$ as
\begin{equation*}
    \mathcal{L}^{\mathcal{G},0}=\left(\mathbb{L}(x_1,x_2,y,z,t,a_v,b,b_{(v,w)} | \ v\in V(\mathcal{G}), (v,w)\in E(\mathcal{G})),d\right),
\end{equation*}
where the degrees and differentials are described by
\begin{align*}
    |x_1|=|x_2|&=8n^2-6n+1, \\
    |y|&=8n^2-2n-2, \\
    |z|&=|y|+1=8n^2-2n-1, \\
    |t|&=2|x_2|+1=16n^2-12n+3, \\
    |a_{v}|&=(k-h_v)|y|+h_v|z|=k(8n^2-2n-2)+h_v, \\
    |b|=|b_{(v,w)}|&=(5n-7)|z|+|y|+1=40n^3-58n^2+7n+6,
\end{align*}
and
\begin{align*}
    &dx_1=dx_2=dy=dz=da_{v}=0,  \ \ \ \ \ dt=[x_2,x_2],  \\[6pt]
    &db=(\ad z)^{5n-7}(y)+(\ad z)^{n-6}(\ad x_1)^{4n+1}(y)+(\ad z)^{n-6}(\ad x_1)^{4n}([x_2,y]) \\
    &\ \ \ +(\ad y)^{2n}(\ad z)^{n-5}(\ad x_1)^{2n-1} (x_2) +(\ad y)^{2n+1}(\ad z)^{n-6}(\ad x_1)^{2n-3}([x_2,t]), \\[6pt]
    &db_{(v,w)}=(\ad a_{v})^3(\ad x_1)^{2n-1} (\ad y)^{2n-3(k-h_v)}(\ad z)^{n-5-3h_v}(x_2)  \\
    &\ \ \ \ \ \ \ \ \ +(\ad a_{v})(\ad a_{w})(\ad x_1)^{2n-3}(\ad y)^{2n+1-(2k-h_v-h_w)}(\ad z)^{n-6-(h_v+h_w)}([x_2,t]) .
\end{align*}

\smallskip

The differential $d$ is well-defined by direct computation, and the following is an important observation.
\begin{lemma}\label{lm1}
    The graded $R$-module $\mathcal{L}^{\mathcal{G},0}$ is trivial in degree $|b|+3n$.
\end{lemma}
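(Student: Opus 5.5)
Since $\mathcal{L}^{\mathcal{G},0}$ is a free graded Lie $R$-algebra, it is spanned in each degree by brackets of generators. The degree of such a bracket is the sum of the degrees of the generators in it, counted with multiplicity. So it suffices to show a purely arithmetic fact: the number $T:=|b|+3n=40n^3-58n^2+10n+6$ is not a sum of generator degrees. The differential plays no role.

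The first step is to discard $b$ and the $b_{(v,w)}$. They all have degree $|b|$, and the smallest generator degree is $p:=|x_1|=8n^2-6n+1$. Using one of them gives total degree $|b|$, which is less than $T$. Using one together with anything else gives degree at least $|b|+p$, which exceeds $T$ because $p>3n$. Using two already overshoots. So I only need to treat monomials in $x_1,x_2,y,z,t,a_v$.

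Next I normalize the remaining degrees against $p$. Put $q=4n-2$, so that $|x_i|=p$, $|y|=p+q-1$, $|z|=p+q$, $|t|=2p+1$ and $|a_v|=kp+k(q-1)+h_v$. Suppose the monomial uses $\alpha$ letters $x_i$, $\beta$ letters $y$, $\gamma$ letters $z$, $\tau$ letters $t$, and $A$ letters $a_v$ whose $h$-values sum to $H$, with $0\le H\le kA$. Its degree is then $mp+E$, where
\begin{align*}
m&=\alpha+\beta+\gamma+2\tau+kA,\\
E&=(q-1)s+\gamma+\tau+H,\qquad s:=\beta+\gamma+kA .
\end{align*}
These satisfy $0\le E\le mq$, $s\le m$, $\tau\le (m-s)/2$ and $\gamma+H\le s$.

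The key step is to pin down $m$. From $mp\le T\le m(p+q)$ and direct comparison of cubic polynomials in $n$ (valid for $n\ge 3k+5\ge 11$), only $m\in\{5n-5,\,5n-4\}$ survive. For $m\le 5n-6$ the upper bound $m(p+q)$ falls below $T$, apart from a small-$n$ exception that the hypothesis excludes. For $m\ge 5n-3$ the lower bound $mp$ exceeds $T$.

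For each surviving $m$ the residue $E=T-mp$ is explicit: $12n^2-25n+1$ for $m=5n-5$, and $4n^2-19n+2$ for $m=5n-4$. I then divide $E$ by $q-1=4n-3$ and write $E=(q-1)s+r$ with $r=\gamma+\tau+H$. The constraints give $0\le r\le s+(m-s)/2$, which together with $s\le m$ leaves only a handful of possible values of $s$. For each of these I solve the integer system in $(\alpha,\beta,\gamma,\tau,A,H)$ and check that it has no nonnegative solution. I expect the contradiction to come from an exact evaluation rather than a rough estimate: in trial cases, such as $3n-5$ letters $z$, $n-9$ letters $t$ and $18$ letters $x$, the degree misses $T$ only by a small constant.

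The main obstacle is exactly this final bookkeeping. Because $E$ can exceed $p$, the decomposition $mp+E$ is not canonical a priori, so I must keep the exact constraint linking $\tau$ and $\alpha$ to $m$, namely $\alpha+2\tau=m-s$. Crude inequalities are not enough; the constant terms must be tracked exactly. I would first try working modulo $q-1$ together with the linear constraints. If that leaves residual cases, I would also reduce modulo $2$, using that $|x_i|$, $|z|$, $|t|$ are odd and $|y|$ is even, to eliminate them.
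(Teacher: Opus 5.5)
Your setup is correct and differs from the paper's. Discarding $b$ and the $b_{(v,w)}$ is right. So are the decomposition $mp+E$ with $E=(q-1)s+r$, $r=\gamma+\tau+H$, and the constraints $\gamma+H\le s$, $\alpha+2\tau=m-s$. The sandwich $mp\le T\le m|z|$ does leave only $m\in\{5n-5,5n-4\}$, with no small-$n$ exception, since $(5n-6)|z|=|b|=T-3n$.

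The gap is that the argument stops exactly where the content lies. Nonexistence for the two surviving values of $m$ is only announced, and the residues you record for that step are wrong. The correct values are
\[
T-(5n-5)p=12n^2-25n+11,\qquad T-(5n-4)p=4n^2-19n+10,
\]
not $12n^2-25n+1$ and $4n^2-19n+2$. This matters. With your value for $m=5n-5$, your own trial monomial ($3n-5$ letters $z$, $n-9$ letters $t$, $18$ letters $x$, so $s=3n-5$, $r=4n-14$) satisfies $(q-1)s+r=12n^2-25n+1$ exactly. It would be a solution, and the lemma would be false. Its true degree is $T-10$, as you observed, so your residues and your trial computation contradict each other. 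The deferred case check could not have gone through with the numbers as written.

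With the correct residues, no exact bookkeeping or parity argument is needed: your crude bound $r\le s+(m-s)/2=(m+s)/2$ settles both cases. Lowering $s$ by one raises $r$ by $4n-3$ while the bound drops by $1/2$, so only the largest admissible $s$ matters.
\begin{itemize}
\item For $m=5n-4$: $(4n-3)(n-4)=E+2$, so $s\le n-5$ and $r\ge 4n-5>(6n-9)/2\ge(m+s)/2$.
\item For $m=5n-5$: $(4n-3)(3n-4)=E+1$, so $s\le 3n-5$ and $r\ge 4n-4>4n-5\ge(m+s)/2$.
\end{itemize}

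Repaired this way, your route is shorter and case-free. The paper instead absorbs the $a_v$ via $|a_v|=(k-h_v)|y|+h_v|z|$. It then uses the degree-preserving exchange $(k_1,k_2,k_3,k_4)\mapsto(k_1+2,k_2-1,k_3+1,k_4-1)$ to reach $k_2=0$ or $k_4=0$, and runs congruences modulo $2n-1$ and $4n-1$ through three subcases. The margin of one in your case $m=5n-5$ is exactly the paper's Case~1 near-miss ($k_3=3n-5$, $k_4=n+1$, $k_1=-2$). There the would-be solution needs $\tau=n+1$, but $\alpha+2\tau=2n$ allows at most $\tau=n$.
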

\begin{proof}
    For $n\geq 3k+5\geq 11$, we have
    $$3n<|x_1|=|x_2|<|y|<|z|<|t|<|a_{v}|<|b|=|b_{(v,w)}|.$$
    It suffices to show the following equation
    \begin{equation}\label{eq0}
        k_1|x_1|+k_2|y|+k_3|z|+k_4|t|=|b|+3n
    \end{equation}
    with variables $k_1,k_2,k_3,k_4$ admits no non-negative integer solution.

    Suppose the equation has a non-negative integer solution $(k_1,k_2,k_3,k_4)$. By direct computation, we have
    $$k_1\leq 5n-4, \ \ k_2\leq 5n-6, \ \ k_3\leq 5n-6, \ \ k_4\leq \frac{5n}{2}-2.$$
    Note that if $(k_1,k_2,k_3,k_4)$ is a solution, then $(k_1+2,k_2-1,k_3+1,k_4-1)$ is also a solution. By induction, we can reach a solution with the second or fourth entry being zero. Hence, we may suppose $k_2$ or $k_4$ equals zero without loss of generality.

    {\bf Case 1}: $k_2=0$.

    After substituting $k_2$ and the degrees of the generators, Equation {(\ref{eq0})} becomes
    $$(2n-1)(4n-1)k_1+(2n-1)(4n+1)k_3+(2(2n-1)(4n-1)+1)k_4=40n^3-58n^2+10n+6.$$
    Modulo both sides by $2n-1$, we obtain the congruence
    $$k_4\equiv n+1 \ \ (\mathrm{mod} \ 2n-1).$$
    Since $k_4\leq \frac{5n}{2}-2$, it follows that $k_4=n+1$. Then we have
    \begin{equation}\label{eq1}
        (4n-1)k_1+(4n+1)k_3=12n^2-25n-3.
    \end{equation}
    Modulo both sides by $4n-1$, we get
    $$2k_3\equiv 2n-9 \ \ (\mathrm{mod} \ 4n-1).$$
    Since $k_3\leq 5n-6$ and $n\geq 11$, it follows that $k_3=3n-5$. By Equation (\ref{eq1}), we have $k_1=-2$, a contradiction.

    {\bf Case 2}: $k_4=0$.

    After substituting $k_4$ and the degrees of the generators, Equation (\ref{eq0}) becomes
    $$(2n-1)(4n-1)k_1+((2n-1)(4n+1)-1)k_2+(2n-1)(4n+1)k_3=40n^3-58n^2+10n+6.$$
    Modulo both sides by $2n-1$, we obtain the congruence
    $$-k_2\equiv n+1 \ \ (\mathrm{mod} \ 2n-1).$$
    Since $k_2\leq 5n-6$ and $n\geq 11$, it follows that $k_2=n-2$ or $3n-3$.

    {\bf Subcase 2.1}: $k_2=n-2$.

    After substitution, Equation (\ref{eq0}) becomes
    \begin{equation}\label{eq2}
        (4n-1)k_1+(4n+1)k_3=16n^2-12n-2.
    \end{equation}
    Modulo both sides by $4n-1$, we get
    $$2k_3\equiv -4 \ \ (\mathrm{mod} \  4n-1).$$
    Since $k_3\leq 5n-6$ and $n\geq 11$, it follows that $k_3=4n-3$. By Equation (\ref{eq2}), we have $k_1=-1$, a contradiction.

    {\bf Subcase 2.2}: $k_2=3n-3$.

    After substitution, Equation (\ref{eq0}) becomes
    \begin{equation}\label{eq3}
        (4n-1)k_1+(4n+1)k_3=8n^2-10n.
    \end{equation}
    Modulo both sides by $4n-1$, we get
    $$2k_3\equiv -2 \ \ (\mathrm{mod} \  4n-1).$$
    Since $k_3\leq 5n-6$ and $n\geq 11$, it follows that $k_3=4n-2$. By Equation (\ref{eq3}), we have $k_1=-2(n+1)$, a contradiction.

    This completes the proof of Lemma \ref{lm1}.

\end{proof}

Now we shall construct $\mathcal{L}^{\mathcal{G}}$ from $\mathcal{L}^{\mathcal{G},0}$ inductively. For $s=1,2,\dots ,3n$, denote by  $\Phi_{|b|+s}$ the suspension of the graded $R$-module $Z_{|b|+s-1}(\mathcal{L}^{\mathcal{G},s-1})$, where $Z_{|b|+s-1}(\mathcal{L} ^{\mathcal{G},s-1})$ denotes the set of cycles having degree $|b|+s-1$ in $\mathcal{L}^{\mathcal{G},s-1}$. Note that every subring of the rationals is a PID. Thus $\Phi_{|b|+s}$ is a free $R$-module. Therefore, we can inductively define a differential graded Lie $R$-algebra
$$\mathcal{L}^{\mathcal{G},s}=\mathcal{L}^{\mathcal{G},s-1} \sqcup   \mathbb{L}( \Phi_{|b|+s} )$$
by extending the differential $d$ as the following composition
$$\Phi_{|b|+s}\xrightarrow[\cong]{s^{-1}} Z_{|b|+s-1}(\mathcal{L}^{\mathcal{G},s-1} )\hookrightarrow \mathcal{L}_{|b|+s-1}^{\mathcal{G},s-1}$$
for $s=1, 2, \dots ,3n$. Finally, we obtain a dgl $\mathcal{L}^{\mathcal{G},3n}$ that is free as a Lie $R$-algebra. Set $$\mathcal{L}^{\mathcal{G}}=\mathcal{L}^{\mathcal{G},3n}.$$

\begin{remark}
    In our construction, $\mathcal{L}^{\mathcal{G},0}$ is a minimal differential graded Lie $R$-algebra, while $\mathcal{L}^{\mathcal{G}}$ may not be minimal.
\end{remark}

The following lemma is a straightforward result.
\begin{lemma}\label{lm2}
    For $\mathcal{L}^{\mathcal{G}}$ constructed as above, we have:
    \begin{enumerate}
        \item $\mathcal{L}^{\mathcal{G}}$ is $(8n^2-6n+1)$-mild;
        \item $\mathcal{L}^{\mathcal{G}}_{|x_1|}=\langle x_1,x_2 \rangle, \ \mathcal{L}^{\mathcal{G}}_{|y|}=\langle y \rangle, \ \mathcal{L}^{\mathcal{G}}_{|z|}=\langle z \rangle, \ \mathcal{L}^{\mathcal{G}}_{|t|}=\langle t \rangle$;
        \item For $i=0,1,\dots, k$, $\mathcal{L}^{\mathcal{G}}_{(k-i)|y|+i|z|}=\langle a_{v}  |\   h_v=i \  \rangle \oplus \mathbb{L}^{k-i,i}(y,z)$, where $\mathbb{L}^{k-i,i}(y,z)$ denotes the submodule of $\mathbb{L}(y,z)$ generated by Lie monomials with wordlength $k-i$ for $y$ and wordlength $i$ for $z$.
        \item For $s=0,1,\dots,3n$, $H_{|b|+s}(\mathcal{L}^{\mathcal{G}})=0$.
    \end{enumerate}
\end{lemma}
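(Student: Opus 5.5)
Each part follows from the degrees of the generators together with the inductive construction, in which new generators are added only in degrees $|b|+1,\dots,|b|+3n$. I will treat the four parts in order, comparing degrees throughout against $|x_1|=8n^2-6n+1$ and $|b|=40n^3-58n^2+7n+6$.

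For (1), set $r=|x_1|$. All generators of $\mathcal{L}^{\mathcal{G}}$ lie in degrees between $r$ and $|b|+3n$, so it suffices to check $|b|+3n\le r\rho(R)-1$. We have $r\rho(R)-1>(8n^2-6n+1)(20n^3-33n^2+8n+4)-1$, and this product is far larger than $40n^3$ (it is already of order $160n^5$). The inequality is therefore immediate. One should also note that the construction produces a free dgl: at each stage we take a coproduct with a free Lie algebra on a free $R$-module, and $\Phi_{|b|+s}$ is free because $R$ is a PID.

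For (2) and (3), the point is that every generator added in the inductive steps has degree at least $|b|+1$. Likewise $b$ and the $b_{(v,w)}$ have degree $|b|$, which exceeds $2|x_1|+\dots$ in the relevant range. So in degrees below $|b|$ the module $\mathcal{L}^{\mathcal{G}}$ agrees with the free Lie algebra on $x_1,x_2,y,z,t,a_v$. We then enumerate brackets. A bracket of two or more generators has degree at least $2|x_1|=16n^2-12n+2$. This exceeds $|t|-1$, and it is not equal to $|x_1|,|y|,|z|$ or $|t|$, since $|t|=2|x_1|+1$ and brackets of two generators from $\{x_1,x_2\}$ land exactly in $2|x_1|\ne|t|$. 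That settles (2).

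For (3), write $D=(k-i)|y|+i|z|=k|y|+i$. We solve $k_1|x_1|+k_2|y|+k_3|z|+k_4|t|=k|y|+i$ with $0\le i\le k$; an $a_v$ can appear only alone. The strategy mirrors Lemma \ref{lm1}, reducing modulo $2n-1$ and $4n\pm1$, or using the cruder bound that $|x_1|$ and $|t|$ involve a $-6n$ term which cannot be compensated by the small surplus $i\le k<n$. One needs to show the only solutions are $k_1=k_4=0$ and $k_2+k_3=k$, $k_3=i$. I would show this by writing every degree as $8n^2\cdot(\text{count})$ plus a lower-order remainder. The top coefficient forces the total wordlength to be $k$, since $k\cdot 8n^2$ dominates the lower terms when $n\ge 3k+5$. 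With wordlength fixed, the lower-order remainders $-6n+1$, $-2n-2$, $-2n-1$ and the remainder for $t$ must sum to $k(-2n-2)+i$; since $-6n$ is too negative, $x$ and $t$ are excluded. This yields $\mathbb{L}^{k-i,i}(y,z)$ together with the $a_v$ having $h_v=i$.

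For (4), in degrees $|b|$ through $|b|+3n-1$, each cycle of degree $|b|+s-1$ in $\mathcal{L}^{\mathcal{G},s-1}$ becomes a boundary in $\mathcal{L}^{\mathcal{G},s}$. Adding later generators of degree at least $|b|+s+1$ creates no new elements in degrees $\le|b|+s-1$ and creates new boundaries only there. One has to argue that the cycles in degree $|b|+s-1$ of the final algebra coincide with those of $\mathcal{L}^{\mathcal{G},s-1}$: a new generator of degree $\ge|b|+s$ can enter an element of degree $|b|+s-1$ only in brackets with elements of degree $\le -1$, which do not exist. Hence $H_{|b|+s-1}=0$ for $s=1,\dots,3n$.

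The top degree $|b|+3n$ is the main obstacle, because no killing step is performed there. Its elements either come from $\mathcal{L}^{\mathcal{G},0}$, which vanishes in that degree by Lemma \ref{lm1}, or are linear combinations of the new generators in $\Phi_{|b|+3n}$. Brackets involving new generators have degree at least $|b|+1+|x_1|>|b|+3n$, so they cannot contribute. The differential of these new generators is injective, being the inclusion of cycles, so there are no nonzero cycles in degree $|b|+3n$. Hence $H_{|b|+3n}=0$.
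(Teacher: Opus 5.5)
Your proposal is correct. Parts (1), (2) and (4) follow the paper's proof: the bound $|b|+3n\le r\rho(R)-1$, degree comparisons below $|b|$, and the fact that $d\Phi_{|b|+s}=Z_{|b|+s-1}(\mathcal{L}^{\mathcal{G}})$. In the top degree, Lemma \ref{lm1} together with $|x_1|>3n$ leaves only $\Phi_{|b|+3n}$, on which $d$ is injective. For the degrees $|b|,\dots,|b|+3n-1$ you only use positivity of degrees, which is slightly leaner than the paper's identification $\mathcal{L}^{\mathcal{G}}_{|b|+s}=\mathcal{L}^{\mathcal{G},s}_{|b|+s}$.

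The real difference is in (3). The paper only refers back to the congruence arguments of Lemma \ref{lm1}, while you use a leading-order size comparison. This works here because the target has small weight $k$ compared with $n$. In Lemma \ref{lm1} the weight is about $5n$, so size alone would not pin down the solution, which is why congruences are needed there.

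To make your sketch precise, weight $t$ by $2$ and put $W=k_1+k_2+k_3+2k_4$. Each unit of weight contributes between $8n^2-6n+1$ and $8n^2-2n-1$ to the degree, while the target $k|y|+i$ lies in $[k(8n^2-2n-2),\,k(8n^2-2n-1)]$.
\begin{itemize}
\item $W\ge k+1$ is impossible because $8n^2-6n+1>k(4n-2)$, which holds under $n\ge 3k+5$.
\item $W\le k-1$ is impossible because $k<8n^2-2n-1$.
\item With $W=k$, the equation becomes $k_3-(4n-3)k_1-(8n-7)k_4=i$. Since $k_3\le k<4n-3$, this forces $k_1=k_4=0$ and $k_3=i$.
\end{itemize}

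There is one small slip in (2). The quantity $2|x_1|$ equals $|t|-1$; it does not exceed it. Also, to rule out brackets in degree $|t|$, you need that every bracket other than $[x_i,x_j]$ has degree at least $|x_1|+|y|=|t|+4n-4$. This is the inequality $|t|<|x_1|+|y|$ that the paper states explicitly.
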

\begin{proof}
    (1) Put 
    $$r=8n^2-6n+1,\ \ m=40n^3-58n^2+10n+6. $$
    Recall that $n\geq 3k+5 \geq 11$ and $  \rho(R) >20n^3-33n^2+8n+4$. Since $\mathcal{L}^{\mathcal{G}}$ is generated as a Lie $R$-algebra by a range of dimension through $r$ to $m$, we have
    $$r \rho(R)-1> r+2\rho(R) -3>40n^3-58n^2+10n+6=m$$
    by direct computation. By definition, $\mathcal{L}^{\mathcal{G}}$ is $(8n^2-6n+1)$-mild.

    (2) It directly follows from the following inequalities for $n\geq 11$
    \begin{equation*}
        \begin{split}
    &|x_1|=|x_2|<|y|<|z|<2|x_1|, \\
    &2|x_1|<|t|=2|x_1|+1<|x_1|+|y| .
        \end{split}
    \end{equation*}

    (3) Fix $i$. Recall that $n\geq 3k+5$ and $|a_{v}|=(k-h_v)|y|+h_v|z|$ for $v\in V(\mathcal{G})$. Since $|x_1|>k\geq i$, it suffices to show the following equation
    \begin{equation*}
        k_1|x_1|+k_2|y|+k_3|z|+k_4|t|=(k-i)|y|+i|z|
    \end{equation*}
    with variables $k_1,k_2,k_3,k_4$ has a unique non-negative integer solution $(k_1,k_2,k_3,k_4)=(0,k-i,i,0)$, which can be derived by similar arguments as in Lemma \ref{lm1}.

    (4) Since $|x_1|>3n$ for $n\geq 11$, we have
    $$\mathcal{L}_{|b|+s}^{\mathcal{G}}=\mathcal{L}_{|b|+s}^{\mathcal{G},0}\oplus \Phi_{|b|+s} =\mathcal{L}_{|b|+s}^{\mathcal{G},s}$$
    for $s=1,2,\dots,3n$. Thus, we have
    $$B_{|b|+s}(\mathcal{L}^{\mathcal{G}})=B_{|b|+s}(\mathcal{L}^{\mathcal{G},0})+d\Phi_{|b|+s+1}=Z_{|b|+s}(\mathcal{L}^{\mathcal{G},s})$$
    for $s=0,1,\dots, 3n-1$, where $B_{|b|+s}(\mathcal{L}^{\mathcal{G}})$ and $B_{|b|+s}(\mathcal{L}^{\mathcal{G},0})$ denote the boundaries of degree $|b|+s$ in $\mathcal{L}^{\mathcal{G}}$ and $\mathcal{L}^{\mathcal{G},0}$, respectively. It follows that
    $$H_{|b|+s}(\mathcal{L}^{\mathcal{G}})=\frac{Z_{|b|+s}(\mathcal{L}^{\mathcal{G}})}{B_{|b|+s}(\mathcal{L}^{\mathcal{G}})}=\frac{Z_{|b|+s}(\mathcal{L}^{\mathcal{G}})}{Z_{|b|+s}(\mathcal{L}^{\mathcal{G},s})}=0$$
    for $s=0,1,\dots, 3n-1$. Finally, Lemma \ref{lm1} gives $\mathcal{L}^{\mathcal{G},0}_{|b|+3n}=0$. Then according to the construction of $\Phi_{|b|+3n}$, there are no cycles in degree $|b|+3n$. Hence we have 
    $$H_{|b|+3n}(\mathcal{L}^{\mathcal{G}})=\frac{Z_{|b|+3n}(\mathcal{L}^{\mathcal{G}})}{B_{|b|+3n}(\mathcal{L}^{\mathcal{G}})}=Z_{|b|+3n}(\mathcal{L}^{\mathcal{G}})=0.$$
    This completes the proof of Lemma \ref{lm2}.

\end{proof}

\section{Characterization of self-homotopy equivalences}

Since $\Phi_{|b|+s}$ is a free $R$-module for $s=1,2,\dots,3n$, we may choose a free basis $\{\phi_{s,\alpha}\}_{\alpha\in I_s}$ for $\Phi_{|b|+s}$. By virtue of Lemma \ref{lm2}\,(1), we proceed to characterize the group $\mathcal{E}(\mathcal{L}^{\mathcal{G}})$ of self-homotopy equivalences.
\begin{lemma}\label{hx1}
    Every $\sigma\in \mathrm{aut}(\mathcal{G})$ induces an automorphism $f_{\sigma}$ of $\mathcal{L}^{\mathcal{G}}$.
\end{lemma}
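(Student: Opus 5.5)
First I define $f_\sigma$ on $\mathcal{L}^{\mathcal{G},0}$ and check that it commutes with $d$; then I extend it through the inductive construction $\mathcal{L}^{\mathcal{G},s}=\mathcal{L}^{\mathcal{G},s-1}\sqcup\mathbb{L}(\Phi_{|b|+s})$.

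On generators of $\mathcal{L}^{\mathcal{G},0}$ I set
$$f_\sigma(x_1)=x_1,\ f_\sigma(x_2)=x_2,\ f_\sigma(y)=y,\ f_\sigma(z)=z,\ f_\sigma(t)=t,\ f_\sigma(b)=b,$$
$$f_\sigma(a_v)=a_{\sigma(v)},\ f_\sigma(b_{(v,w)})=b_{(\sigma(v),\sigma(w))}.$$
This preserves degrees. Indeed, $h_{\sigma(v)}=h_v$, so $|a_{\sigma(v)}|=|a_v|$. Also $(\sigma(v),\sigma(w))\in E(\mathcal{G})$ because $\sigma$ is a graph automorphism. Since $\mathcal{L}^{\mathcal{G},0}$ is free, this assignment extends uniquely to a graded Lie algebra endomorphism. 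Applying the same recipe to $\sigma^{-1}$ gives a two-sided inverse, so it is an automorphism.

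To check $d f_\sigma=f_\sigma d$ on generators:
\begin{itemize}
\item On $x_1,x_2,y,z,a_v$ both sides vanish.
\item On $t$ both sides equal $[x_2,x_2]$.
\item On $b$ the differential $db$ involves only $x_1,x_2,y,z,t$, which are fixed.
\item On $b_{(v,w)}$, the formula for $db_{(v,w)}$ depends on $v,w$ only through $a_v,a_w$ and the exponents built from $h_v,h_w$. Since $h_{\sigma(v)}=h_v$ and $h_{\sigma(w)}=h_w$, we get $f_\sigma(db_{(v,w)})=db_{(\sigma(v),\sigma(w))}=d f_\sigma(b_{(v,w)})$.
\end{itemize}
Hence $f_\sigma$ is a dgl automorphism of $\mathcal{L}^{\mathcal{G},0}$.

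For the extension, I induct on $s$. Suppose $f_\sigma$ is a dgl automorphism of $\mathcal{L}^{\mathcal{G},s-1}$ with inverse $f_{\sigma^{-1}}$. Then $f_\sigma$ restricts to an $R$-module automorphism of the cycles $Z_{|b|+s-1}(\mathcal{L}^{\mathcal{G},s-1})$. Through the suspension isomorphism $s^{-1}:\Phi_{|b|+s}\to Z_{|b|+s-1}(\mathcal{L}^{\mathcal{G},s-1})$, this gives an automorphism $\Phi_{|b|+s}\to\Phi_{|b|+s}$. Explicitly, $f_\sigma(\phi)=s\,f_\sigma(s^{-1}\phi)$. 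Since the cycle $f_\sigma(s^{-1}\phi)$ is a linear combination of the basis elements $s^{-1}\phi_{s,\alpha}$, the image $f_\sigma(\phi)$ is the corresponding combination of the $\phi_{s,\alpha}$.

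With this definition, $d f_\sigma(\phi)=f_\sigma(s^{-1}\phi)=f_\sigma(d\phi)$, so $f_\sigma$ commutes with $d$ on the new generators. By freeness of the coproduct, $f_\sigma$ extends to a dgl endomorphism of $\mathcal{L}^{\mathcal{G},s}$. The same construction for $\sigma^{-1}$ is inverse to it on the new generators, since it is induced by the inverse automorphism of the cycles. So $f_\sigma$ is a dgl automorphism of $\mathcal{L}^{\mathcal{G},s}$. After $3n$ steps this yields $f_\sigma\in\mathrm{Aut}(\mathcal{L}^{\mathcal{G}})$.

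Moreover, $f_{\sigma\tau}=f_\sigma f_\tau$ at every stage, by uniqueness of the extension.

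The only point that needs care is the extension step. One must note that $f_\sigma$ maps cycles of degree $|b|+s-1$ to cycles, because it is a chain map. One must also note that $f_\sigma$ is defined on $\Phi$ intrinsically via the cycle module, rather than on the chosen basis. This is what makes the extension well-defined and invertible. Everything else is a direct check.
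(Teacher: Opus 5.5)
Your proposal is correct and follows essentially the same route as the paper. Both arguments define the automorphism on $\mathcal{L}^{\mathcal{G},0}$ by letting $\sigma$ permute the $a_v$ and $b_{(v,w)}$, then extend stage by stage using the automorphism induced on the cycle modules $Z_{|b|+s-1}(\mathcal{L}^{\mathcal{G},s-1})$. Your basis-free formula $f_\sigma(\phi)=s\,f_\sigma(s^{-1}\phi)$ is exactly the paper's $\psi_{s,\alpha}$, determined by $d\psi_{s,\alpha}=g_\sigma(d\phi_{s,\alpha})$. Your explicit checks of $df_\sigma=f_\sigma d$ and of the inverse $f_{\sigma^{-1}}$ fill in what the paper calls straightforward.
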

\begin{proof}
    We first define a morphism $g_{\sigma}$ of $\mathcal{L}^{\mathcal{G},0}$ by setting
    \begin{align*}
            g_{\sigma}(x_i)&=x_i,  \ \  i=1,2,  \\
            g_{\sigma}(y)&=y, \\
            g_{\sigma}(z)&=z, \\
            g_{\sigma}(t)&=t, \\
            g_{\sigma}(a_{v})&=a_{\sigma(v)}, \ \ v\in V(\mathcal{G}), \\
            g_{\sigma}(b)&=b, \\
            g_{\sigma}(b_{(v,w)})&=b_{(\sigma(v),\sigma(w))}, \ \  (v,w)\in E(\mathcal{G}).
    \end{align*}
    It is straightforward to verify that $g_{\sigma}$ is an automorphism of $\mathcal{L}^{\mathcal{G},0}$.

    Note that $g_{\sigma}(d\phi_{1,\alpha})$ is a cycle in $\mathcal{L}^{\mathcal{G},0}$ for $\alpha \in I_1$. According to the differential of $\mathcal{L}^{\mathcal{G},1}$, there exists a unique $\psi_{1,\alpha}\in \Phi_{|b|+1}$ such that $g_{\sigma}(d\phi_{1,\alpha})=d\psi_{1,\alpha}$. Thus we can define $g_{\sigma,1}$ on $\mathcal{L}^{\mathcal{G},1}$ as the extension of $g_{\sigma}$ by setting
    $$g_{\sigma,1}\big|_{\mathcal{L}^{\mathcal{G},0}}=g_{\sigma},\ \ g_{\sigma,1}(\phi_{1,\alpha})=\psi_{1,\alpha} \ \ \mathrm{for}\  \alpha\in I_1.$$
    Since $g_{\sigma}$ is an automorphism of $\mathcal{L}^{\mathcal{G},0}$, it induces an isomorphism on the free $R$-module $Z_{|b|}(\mathcal{L}^{\mathcal{G},0})$. Thus, we conclude that $\{\psi_{1,\alpha} \}_{\alpha\in I_1}$ also forms a free basis for $\Phi_{|b|+1}$, which implies $g_{\sigma,1}$ is also an automorphism.

    By similar arguments, we can extend $g_{\sigma,s}$ to an automorphism $g_{\sigma,s+1}$ on $\mathcal{L}^{\mathcal{G},s+1}$ inductively for $s=1,2,\dots,3n-1$. Finally, take $f_{\sigma}=g_{\sigma,3n}$ as required.

\end{proof}

For convenience, let $\mathcal{L}$ denote the Lie subalgebra of $\mathcal{L}^{\mathcal{G},0}$ generated by
$$\left\{x_1,x_2,y,z,t,a_{v}\ \middle|\  v\in V(\mathcal{G})\right\},$$
and let $\mathcal{T}$ denote the universal enveloping algebra of $\mathcal{L}$. Thus, $\mathcal{T}$ is a tensor algebra and can be considered as a Lie algebra via graded commutators
\begin{equation*}
[X,Y]=X\otimes Y-(-1)^{|X||Y|}Y\otimes X,\ \ \ X,Y\in \mathcal{T}.
\end{equation*}
We note that $\mathcal{L}$ is a Lie subalgebra of $\mathcal{T}$, and the only non-trivial differential on the generators of $\mathcal{L}$ is $dt=2x_2^2$. The inequality $|x_1|+|b|-1>|b|+3n$ implies that all decomposable elements of degree $\leq |b|+3n$ belong to $\mathcal{L}$. In the rest of the proof, by expanding Lie brackets as graded commutators, we can view such decomposable elements as elements in the tensor algebra $\mathcal{T}$.

Define the sets of Lie monomials
\begin{align*}
    S_1&=\left\{ (\ad z)^{5n-7}(y)   \right\}, \\
    S_2&=\left\{ (\ad z)^{n-6}(\ad x_{i_1})\dots (\ad x_{i_{4n+1}}) (y) \ \middle| \ i_1,\dots,i_{4n+1} =1 \ \mathrm{or } \ 2 \right\}, \\
    S_3&=\left\{ (\ad y)^{2n}(\ad z)^{n-5}(\ad x_1)^{2n-1}(x_2) \right\}, \\
    S_4&=\left\{ (\ad y)^{2n+1}(\ad z)^{n-6}(\ad x_1)^{2n-3}([x_2,t]) \right\}, \\
    S_5&=\left\{ (\ad a_v)(\ad a_w)(\ad a_u)(\ad x_1)^{2n-1} (\ad y)^{\Delta_1}(\ad z)^{\Delta_2}(x_2) \middle| v,w,u\in V(\mathcal{G})\right\}, \\
    S_6&=\left\{ (\ad a_{v})(\ad a_{w})(\ad x_1)^{2n-3}(\ad y)^{\Delta_3}(\ad z)^{\Delta_4}([x_2,t]) \ \middle| \ v,w\in V(\mathcal{G})\right\}.
\end{align*}
Here for convenience, we write
\begin{equation*}
    \begin{split}
        \Delta_1=& 2n-(3k-h_v-h_w-h_u), \\
        \Delta_2=& n-5-(h_v+h_w+h_u), \\
        \Delta_3=& 2n+1-(2k-h_v-h_w), \\
        \Delta_4=& n-6-(h_v+h_w).
    \end{split}
\end{equation*}

\begin{lemma}\label{lm3}
    With notation as above. The Lie monomials in
    $$S_1 \cup S_2 \cup S_3 \cup S_4 \cup S_5 \cup S_6$$
    are linearly independent.
\end{lemma}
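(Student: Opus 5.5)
Throughout, I work in the tensor algebra $\mathcal{T}$ and use the multigrading by generator content. The free Lie algebra $\mathcal{L}$ on $x_1,x_2,y,z,t,a_v$ is graded by the multidegree that records, for each generator, how many times it occurs. Lie monomials of distinct multidegrees lie in distinct homogeneous components and are automatically independent. So the plan is first to sort the monomials of $S_1\cup\dots\cup S_6$ by multidegree, and then to prove independence separately inside each multidegree class.

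\textbf{Step 1: sorting by multidegree.} The sets are separated by their generator content:
\begin{itemize}
\item $S_1$ is the only set involving $y,z$ alone.
\item $S_2$ involves $x$'s but no $t$ and no $a$'s, and contains exactly one $y$.
\item $S_3$ has no $t$ and no $a$'s, and contains $2n$ copies of $y$.
\item $S_4$ contains $t$ and no $a$'s.
\item $S_5$ contains three $a$'s and no $t$.
\item $S_6$ contains two $a$'s and one $t$.
\end{itemize}
Within $S_5$, the multidegree determines the multiset $\{v,w,u\}$, and within $S_6$ it determines $\{v,w\}$. Monomials with different multisets are therefore independent. The number of $x_2$'s separates $S_2$ further, since each monomial of $S_2$ has multidegree $(4n+1-j,\,j)$ in $(x_1,x_2)$. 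After this sorting, each class contains only a few monomials.

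\textbf{Step 2: each class contains a nonzero element.}
\begin{itemize}
\item $S_1,S_3,S_4$: each class is a single monomial, so it suffices that the monomial is nonzero in $\mathbb{L}$. I will see this by writing it as an iterated bracket $(\ad u_1)\cdots(\ad u_r)(w)$ of generators, or of the basic element $[x_2,t]$, and extracting one tensor word with coefficient $\pm1$. For example, $(\ad z)^{5n-7}(y)$ has coefficient $1$ on the word $z^{5n-7}y$.
\item $S_2$: the monomials $(\ad x_{i_1})\cdots(\ad x_{i_{4n+1}})(y)$ map under $\ad$ to distinct tensor words $x_{i_1}\cdots x_{i_{4n+1}}y$ plus terms in which $y$ is not at the end. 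Hence they are independent in $TW$; equivalently, they are part of a Lyndon/Hall basis, since $\ad$ of a free associative word on $x$'s applied to $y$ is injective. Applying $(\ad z)^{n-6}$ preserves independence, because $\ad z$ is injective on $\mathbb{L}$ away from multiples of $z$ (free Lie algebras have trivial centralizers outside the span of $z$).
\item $S_5$: the monomials sharing a multiset $\{v,w,u\}$ differ only in the ordering of the three $\ad a$'s. Their leading tensor words $a_va_wa_u x_1^{2n-1}y^{\Delta_1}z^{\Delta_2}x_2$ are distinct when the ordered triples differ, so independence follows by comparing coefficients of these words.
\item $S_6$: the same word-comparison argument as for $S_5$ applies.
\end{itemize}

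\textbf{Main obstacle.} The word extraction has to be done with the correct sign and with control over which words actually occur. The delicate point is that $\ad$-expansion produces $2^r$ words, and I need the chosen word, with all $\ad$-letters on the left in order, to appear exactly once. This holds because in $(\ad u_1)\cdots(\ad u_r)(w)$ the word $u_1\cdots u_r w$ arises only from taking the left term in every commutator. The coefficient is $+1$ when $w$ is a generator. When $w=[x_2,t]$, the word $u_1\cdots u_r x_2 t$ arises once and with coefficient $\pm1$.

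Since the coefficients are $\pm1$, no inverted primes are needed, and the conclusion holds over $R$. The result is that distinct monomials have distinct distinguished words. In a multidegree class with several monomials, I order the distinguished words and note that the distinguished word of one monomial could still occur inside another monomial's expansion. I handle this with a triangularity argument: the distinguished word is lexicographically maximal in its expansion with respect to an order in which the $\ad$-letters precede the final letter. The coefficient matrix is then unitriangular, which gives independence.
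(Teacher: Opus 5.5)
Your argument is essentially the paper's. You separate $S_1,\dots,S_6$ by multidegree. Then in each class you use the word obtained by taking the left-hand term of every commutator, which is the unique word ending in the innermost letter. Your finer sorting (one coordinate per $a_v$, and splitting $S_2$ by the number of $x_2$'s) is harmless but not needed.

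Your closing triangularity step should be dropped. It is false as stated, and the problem it addresses does not occur.
\begin{itemize}
\item \textbf{The counterexample.} Use a left-to-right lexicographic order, which is the natural reading of your claim. In $S_5$ take $v\neq w$. Choosing the right-hand term in the outermost bracket of $(\ad a_v)(\ad a_w)(\ad a_u)\cdots(x_2)$ produces the word $a_wa_ux_1^{2n-1}y^{\Delta_1}z^{\Delta_2}x_2a_v$, and it arises only from that choice, so its coefficient is $\pm1$. This word beats $a_va_wa_u\cdots x_2$ whenever $a_w>a_v$. Both orderings $(v,w,u)$ and $(w,v,u)$ lie in the same class, so no order on letters makes every distinguished word maximal.
\item \textbf{What you actually need.} You already observed it. Within a class, all monomials share the same innermost letter ($y$ for $S_1,S_2$; $x_2$ for $S_3,S_5$; $t$ for $S_4,S_6$). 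That letter occurs once and differs from every $\ad$-letter. So the distinguished word is the \emph{only} word in the expansion ending in that letter.
\item \textbf{Why this suffices.} Project onto the span of words ending in that letter. Each monomial goes to its own distinguished word with coefficient $1$, so the coefficient matrix is the identity. In particular, no distinguished word can appear in another monomial's expansion. This is exactly the paper's ``only summand with $y$ on the rightmost side.''
\end{itemize}

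The detour through injectivity of $\ad z$ for $S_2$ is also unnecessary. The word $z^{n-6}x_{i_1}\cdots x_{i_{4n+1}}y$ works directly, as in the paper. Your justification there is also slightly off. Since $|z|=8n^2-2n-1$ is odd, $[z,[z,z]]=0$, so the centralizer of $z$ is not just the span of $z$. It is still true that $\ad z$ is injective on the multidegree components that are not pure in $z$, which is all you use.
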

\begin{proof}
    We first introduce a multigrading in $\mathcal{L}$ defined by the wordlengths of the generators $\{x_1,x_2,y,z,t,a_v\}$. More precisely, the multidegrees of the generators are given in the following table.

$$\begin{tabular}{c|c}
generators&multidegree\\
\hline
$x_1$&$(1,0,0,0,0,0)$\\
\hline
$x_2$&$(0,1,0,0,0,0)$\\
\hline
$y$&$(0,0,1,0,0,0)$\\
\hline
$z$&$(0,0,0,1,0,0)$\\
\hline
$t$&$(0,0,0,0,1,0)$ \\
\hline
$a_v,\ v\in V(\mathcal{G})$&$(0,0,0,0,0,1).$
\end{tabular}
$$

Therefore, the multidegrees of Lie monomials in each set are listed as follows.

$$\begin{tabular}{c|c}
sets&multidegree\\
\hline
$S_1$&$(0,0,1,5n-7,0,0)$\\
\hline
$S_2$&$(*,*,1,n-6,0,0)$\\
\hline
$S_3$&$(2n-1,1,2n,n-5,0,0)$\\
\hline
$S_4$&$(2n-3,1,2n+1,n-6,1,0)$\\
\hline
$S_5$&$(2n-1,1,*,*,0,3)$\\
\hline
$S_6$&$(2n-3,1,*,*,1,2)$
\end{tabular}
$$

Note that the multidegrees of $S_1,S_2,\dots,S_6$ are pairwise distinct. Thus, it suffices to show the linear independence for each set separately.

As an element in the tensor algebra $\mathcal{T}$, $(\ad z)^{5n-7}(y)$ in $S_1$ can be expanded as
\begin{equation*}
    \begin{split}
        (\ad z)^{5n-7}(y)&=\Big[z,\big[z,\dots[z,y]\dots\big]\Big] \\
        &=z^{5n-7}y+\mathrm{other \ summands}.
    \end{split}
\end{equation*}
In the expansion above, $z^{5n-7}y$ is the only summand with $y$ on the rightmost side, which implies that the element in $S_1$ is linearly independent.

For the element $(\ad z)^{n-6}(\ad x_{i_1})\dots (\ad x_{i_{4n+1}})(y)$ in $S_2$ with $i_1,i_2,\dots,i_{4n+1}\in\{1,2\}$, it can be expanded as
\begin{equation*}
    \begin{split}
        (\ad z)^{n-6}(\ad x_{i_1})\dots (\ad x_{i_{4n+1}})(y)&=\Big[z,\dots\big[x_{i_1},\dots[x_{i_{4n+1}},y]\dots\big]\dots \Big] \\
        &=z^{n-6}x_{i_1}x_{i_2}\dots x_{i_{4n+1}}y+\mathrm{other \ summands},
    \end{split}
\end{equation*}
and $z^{n-6}x_{i_1}x_{i_2}\dots x_{i_{4n+1}}y$ is the only summand with $y$ {on the rightmost side}. Since the elements
$$z^{n-6}x_{i_1}x_{i_2}\dots x_{i_{4n+1}}y,\ \ i_1,i_2,\dots,i_{4n+1}\in\{1,2\}$$
are linearly independent in $\mathcal{T}$, we conclude that the elements in $S_2$ are linearly independent.

By similar arguments, we can show the linear independence for $S_3,\dots,S_6$ separately, which completes the proof of Lemma \ref{lm3}.

\end{proof}

Recall that homotopy equivalences between $r$-mild dgl's are precisely quasi-isomorphisms \cite[p. 426]{AN1}. In particular, the automorphism $f_{\sigma}$ defined in Lemma \ref{hx1} is a self-homotopy equivalence of $\mathcal{L}^{\mathcal{G}}$. Then based on Lemma \ref{lm3}, we can prove the following lemmas.

\begin{lemma}\label{lm4}
    With notation as in Lemma \ref{hx1}. For every morphism $f$ with $[f]\in \mathcal{E}(\mathcal{L}^{\mathcal{G}})$, there exists $\sigma\in \mathrm{aut}(\mathcal{G})$ such that
    \begin{alignat*}{2}
        f(x_i)&=f_{\sigma}(x_i),\ \ \ \ && i=1,2, \\
        f(y)&=f_{\sigma}(y), \ \ \ \ && \\
        f(z)&=f_{\sigma}(z), \ \ \ \ && \\
        f(t)&=f_{\sigma}(t), \ \ \ \ && \\
        f(a_v)&=f_{\sigma}(a_v), \ \ \ \ && v\in V(\mathcal{G}), \\
        f(b)&=f_{\sigma}(b)+B, \ \ \ \ &&\\
        f(b_{(v,w)})&=f_{\sigma}(b_{(v,w)})+B_{(v,w)}, \ \ \ \ && (v,w)\in E(\mathcal{G}),
    \end{alignat*}
    where $B$ and $B_{(v,w)}$'s are boundaries.
\end{lemma}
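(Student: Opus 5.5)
The plan is to pin down $f$ on the generators of $\mathcal{L}$ degree by degree, using Lemma \ref{lm2}\,(2),(3). Then we use that $f$ commutes with $d$ on $b$ and the $b_{(v,w)}$, together with Lemma \ref{lm3}, to force all coefficients. Finally, Lemma \ref{lm2}\,(4) absorbs the remaining discrepancy in degree $|b|$ into boundaries.

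\textbf{Step 1: the general shape of $f$.} By Lemma \ref{lm2}\,(2) we can write
\begin{align*}
f(x_i)&=\alpha_{i1}x_1+\alpha_{i2}x_2, & f(y)&=\lambda y,\\
f(z)&=\mu z, & f(t)&=\tau t .
\end{align*}
Since $f$ is a quasi-isomorphism and $x_1,x_2,y,z$ are non-bounding cycles in these degrees, the matrix $(\alpha_{ij})$ is invertible over $R$ and $\lambda,\mu$ are units. The relation $f(dt)=d f(t)$ reads $[f(x_2),f(x_2)]=\tau[x_2,x_2]$. This forces $\alpha_{21}=0$ and $\alpha_{22}^2=\tau$; here one uses that $[x_1,x_1]$, $[x_1,x_2]$, $[x_2,x_2]$ are independent and that $2$ is invertible in $R$. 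By Lemma \ref{lm2}\,(3),
$$f(a_v)=\sum_{h_w=h_v}\gamma_{vw}a_w+P_v,\qquad P_v\in\mathbb{L}^{k-h_v,h_v}(y,z),$$
with only finitely many $\gamma_{vw}$ nonzero. Finally, $f(b)$ and $f(b_{(v,w)})$ are $R$-combinations of $b$ and the $b_{(u,u')}$ plus a decomposable part in $\mathcal{L}$; decomposable elements of degree $|b|$ lie in $\mathcal{L}$ because $|x_1|+|b|-1>|b|+3n$.

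\textbf{Step 2: comparing $f(db)$ with $d f(b)$.} Write $f(b)=\beta b+\sum\beta_{(u,u')}b_{(u,u')}+D$ with $D$ decomposable, and expand the identity $f(db)=\beta\, db+\sum\beta_{(u,u')}db_{(u,u')}+dD$ in $\mathcal{T}$. The only differential on $\mathcal{L}$ is $dt=2x_2^2$. So $dD$ only contributes terms in which some $t$ has been replaced by $x_2^2$, and each such term has $t$-multidegree one less than the term it came from.

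Next sort the expansion by the multigrading of Lemma \ref{lm3}. The summands of $f(db)$ coming from each of the five terms of $db$ land in the multidegrees of $S_1,\dots,S_4$, after expanding $f(x_1)=\alpha_{11}x_1+\alpha_{12}x_2$ into words. The terms $db_{(u,u')}$ land in the multidegrees of $S_5,S_6$, which contain $a$'s, so $\beta_{(u,u')}=0$ modulo corrections from $P_v$. Then:
\begin{itemize}
\item Comparing the $S_1$ component gives $\mu^{5n-7}\lambda=\beta$.
\item The $S_2$ components involve all words in $x_1,x_2$. Since $f(x_1)$ contains $\alpha_{12}x_2$, the non-matching words in $S_2$ force $\alpha_{12}=0$, and the two words in $db$ give two further monomial equations in $\alpha_{11},\alpha_{22},\lambda,\mu$.
\item The $S_3$ and $S_4$ components give two more such equations.
\end{itemize}
The choice of exponents ($5n-7$, $n-6$, $4n+1$, $2n$, $2n-3$, and so on) should make this system of monomial equations in the units have only the solution $\alpha_{11}=\alpha_{22}=\lambda=\mu=\tau=\beta=1$.

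The delicate point is to make sure that the terms $dD$ cannot cancel the relevant coefficients. For this I would use the ``rightmost letter'' leading-term argument of Lemma \ref{lm3}: the monomials of $S_1,S_2,S_3$ contain no $t$ and no $x_2^2$ factor in their leading words, and $S_4$ is detected by its $t$. I expect this non-exactness bookkeeping to be the main obstacle, exactly the point where \cite{MB,MB1} went wrong. So I would check explicitly that no element $dD$ can have a nonzero component on the leading words used.

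\textbf{Step 3: the vertex generators and the conclusion.} Apply the same comparison to $f(db_{(v,w)})$ in the multidegrees of $S_5,S_6$. The cubic term $(\ad a_v)^3(\cdots)$ should force each $f(a_v)$ to be a single generator $a_{\sigma(v)}$ with coefficient $1$ and $P_v=0$; the point is that cross terms $a_wa_{w'}a_{w''}$ with distinct indices would appear in $S_5$ with no partner. This defines a bijection $\sigma$ of $V(\mathcal{G})$ preserving each $V_i$; bijectivity follows from invertibility of $f$ on homology. The quadratic term $(\ad a_v)(\ad a_w)(\cdots)$ in $S_6$ then appears in $f(db_{(v,w)})$ and must match some $db_{(u,u')}$. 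Since this requires $(\sigma(v),\sigma(w))\in E(\mathcal{G})$, $\sigma$ is a graph automorphism, and $f(b_{(v,w)})$ has leading part $b_{(\sigma(v),\sigma(w))}$.

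Now $f$ and $f_\sigma$ agree on $x_i,y,z,t,a_v$. Hence $f(b)-f_\sigma(b)$ and $f(b_{(v,w)})-f_\sigma(b_{(v,w)})$ are cycles of degree $|b|$, since their differentials are $f(db)-f_\sigma(db)=0$ and similarly. By Lemma \ref{lm2}\,(4), $H_{|b|}(\mathcal{L}^{\mathcal{G}})=0$, so these differences are boundaries $B$ and $B_{(v,w)}$, as claimed.
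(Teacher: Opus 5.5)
Your outline matches the paper's (shape of $f$ from Lemma~\ref{lm2}, the relation $dt=[x_2,x_2]$, comparison of $f(db)$ with $df(b)$ to fix the scalars, then of $f(db_{(v,w)})$ with $df(b_{(v,w)})$ to fix the $a_v$). Your final step is cleaner than the paper's: once $f|_{\mathcal{L}}=f_\sigma|_{\mathcal{L}}$, the differences $f(b)-f_\sigma(b)$ and $f(b_{(v,w)})-f_\sigma(b_{(v,w)})$ are cycles of degree $|b|$, hence boundaries, so the coefficients of $b$ and $b_{(u,u')}$ never need computing.

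Step~3, however, has two genuine gaps.

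\textbf{Normalization of $f(a_v)$.} The cubic term cannot give coefficient $1$. Write $f(b_{(v,w)})=\sum_e\eta_e b_e+\cdots$, $f(a_v)=\gamma_{vp}a_p+P_v$ and $f(a_w)=\gamma_{wq}a_q+P_w$. The $S_5$-components only give $\sum_{q'}\eta_{(p,q')}=\gamma_{vp}^3$. You must combine this with the $S_6$-components. These give $\eta_{(p,q)}=\gamma_{vp}\gamma_{wq}$, in particular $(p,q)\in E(\mathcal{G})$, and $\eta_e=0$ otherwise, so $\gamma_{vp}^3=\gamma_{vp}\gamma_{wq}$. Then run the same argument on the reverse edge $(w,v)$ to get $\gamma_{wq}^3=\gamma_{wq}\gamma_{vp}$. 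Only together do these yield $\gamma_{vp}^3=1$, hence $\gamma_{vp}=1$ in $R\subset\mathbb{Q}$. So undirectedness is essential, and so is the fact that every vertex lies on an edge ($\mathcal{G}$ connected with more than one vertex). Likewise $P_v=0$ is not seen in $S_5$ at all. It comes from the term $\gamma_{vp}^2(\ad P_v)(\ad a_p)^2(\cdots)(x_2)$, whose multidegree (two $a$'s, no $t$) does not occur in $df(b_{(v,w)})$.

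\textbf{Automorphism.} Knowing that the bijection $\sigma$ maps edges to edges does not make it an automorphism when $\mathcal{G}$ is infinite, which is unavoidable for arbitrary $G$. For example, on $\mathbb{Z}$ with edges $\{i,i+1\}$ for all $i$ and $\{i,i+2\}$ for $i\ge 0$, the shift maps $E$ properly into itself. You also need $(\sigma^{-1}(v),\sigma^{-1}(w))\in E(\mathcal{G})$. The paper gets this by noting that $f|_{\mathcal{L}}$ is now an automorphism. So $f^{-1}(db_{(v,w)})$ is a cycle whose image is a boundary, hence itself a boundary by injectivity of $H(f)$. Its $S_6$-component then forces the generator $b_{(\sigma^{-1}(v),\sigma^{-1}(w))}$ to exist. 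Running your argument on a homotopy inverse of $f$ also works.

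In Step~2, the point you flag as the main obstacle is left as something you ``would check''. It has a one-line resolution. On $\mathcal{L}$, $d$ sends multidegree $(m_1,m_2,m_3,m_4,m_5,m_6)$ to $(m_1,m_2+2,m_3,m_4,m_5-1,m_6)$, so every component of $dD$ has $x_2$-wordlength at least $2$, while the monomials you compare have $x_2$-wordlength at most $1$.

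The order matters, though. As long as $\alpha_{12}\neq 0$, $f(db)$ has components with several $x_2$'s to which $dD$ can contribute, e.g.\ multidegree $(4n-1,2,1,n-6,0,0)$. So the ``non-matching words in $S_2$'' must be taken in multidegree $(4n,1,1,n-6,0,0)$. There the monomial $(\ad z)^{n-6}(\ad x_2)(\ad x_1)^{4n}(y)$ occurs only in $f(db)$, with coefficient $\alpha_{11}^{4n}\alpha_{12}\lambda\mu^{n-6}$, forcing $\alpha_{12}=0$. Only afterwards do all remaining terms lie in $\bigcup_i S_i$.

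Finally, the monomial system must actually be solved, and it is short. The two $S_2$ equations give $\alpha_{11}=\alpha_{22}$. The $S_3$, $S_4$ equations then give $\lambda=\mu$. Then $\lambda^{4n-1}=\alpha_{11}^{4n+1}$ and $\lambda^{2n}=\alpha_{11}^{2n+1}$ force $\lambda=\alpha_{11}=1$.
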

\begin{proof}
By Lemma \ref{lm2}\,(2), we can write
\begin{align*}
        f(x_i)&=\sum_{j=1}^{2} \lambda_{ij}x_j,\ \  i=1,2, \\
        f(y)&=\mu y, \\
        f(z)&=\gamma z, \\
        f(t)&=\xi t, \\
        f(a_v)&=\sum_{v^\prime \in V_{h_v}}\delta_{vv^\prime} a_{v^\prime}+A_v, \ \  v\in V(\mathcal{G}), \\
        f(b)&=\zeta b+\sum_{(v^\prime,w^\prime)\in E(\mathcal{G})}\zeta_{(v^\prime,w^\prime)}  b_{(v^\prime,w^\prime)}+B, \\
        f(b_{(v,w)})&=\eta_{(v,w)} b+\sum_{(v^\prime,w^\prime)\in E(\mathcal{G})} \eta_{(v,w)(v^\prime,w^\prime)}b_{(v^\prime,w^\prime)}+B_{(v,w)}, \ \  (v,w)\in E(\mathcal{G}),
\end{align*}
where all coefficients belong to $R$, all sums have only finitely many nonzero summands and $A_v$'s, $B$, $B_{(v,w)}$'s are decomposable elements. Since $f$ is a homotopy equivalence between mild dgl's, it is a quasi-isomorphism as well. It follows that
$$\lambda_{11}\lambda_{22}-\lambda_{12}\lambda_{21}\neq 0, \ \ \mu\neq 0, \ \ \gamma\neq 0. $$
Since $df(t)=f(dt)$, we obtain
$$2\xi x_2^2=2(\lambda_{21}x_1+\lambda_{22}x_2)^2 $$
in the tensor algebra $\mathcal{T}$. Since $x_1^2$, $x_1x_2$, $x_2x_1$ and $x_2^2$ are linearly independent, we conclude that
$$\lambda_{21}=0, \ \  \xi=\lambda_{22}^2. $$
Note that $\lambda_{21}=0$ implies $\lambda_{11}\lambda_{22}\neq 0$.

As $df(b)=f(db)$, the two expressions below must be equal:
\begin{equation}\label{w1}
    \begin{split}
        df(b)&=\zeta \Big((\ad z)^{5n-7}(y)+(\ad z)^{n-6}(\ad x_1)^{4n+1}(y)+(\ad z)^{n-6}(\ad x_1)^{4n}([x_2,y]) \\
        &+(\ad y)^{2n}(\ad z)^{n-5}(\ad x_1)^{2n-1} (x_2) +(\ad y)^{2n+1}(\ad z)^{n-6}(\ad x_1)^{2n-3}([x_2,t])\Big) \\
        &+\sum_{(v^\prime,w^\prime)\in E(\mathcal{G})}\zeta_{(v^\prime,w^\prime)} \Big( (\ad a_{v^\prime})^3(\ad x_1)^{2n-1} (\ad y)^{2n-3(k-h_{v^\prime})}(\ad z)^{n-5-3h_{v^\prime}}(x_2)   \\
        &+ (\ad a_{v^\prime})(\ad a_{w^\prime})(\ad x_1)^{2n-3}(\ad y)^{2n+1-(2k-h_{v^\prime}-h_{w^\prime})}(\ad z)^{n-6-(h_{v^\prime}+h_{w^\prime})}([x_2,t]) \Big) \\
        &+dB,
    \end{split}
\end{equation}
\begin{equation}\label{w2}
    \begin{split}
        f(db)&=\mu \gamma^{5n-7} (\ad z)^{5n-7}(y)+\mu \gamma^{n-6} (\ad z)^{n-6}(\ad (\lambda_{11}x_1+\lambda_{12}x_2))^{4n+1}(y) \\
        &+\lambda_{22}\mu \gamma^{n-6} (\ad z)^{n-6}(\ad (\lambda_{11}x_1 + \lambda_{12}x_2))^{4n}([x_2,y]) \\
        &+\lambda_{22}\mu^{2n}\gamma^{n-5}  (\ad y)^{2n}(\ad z)^{n-5}(\ad (\lambda_{11}x_1 + \lambda_{12}x_2))^{2n-1} (x_2) \\
        &+\lambda_{22}^3\mu^{2n+1} \gamma^{n-6} (\ad y)^{2n+1}(\ad z)^{n-6}(\ad (\lambda_{11}x_1 + \lambda_{12}x_2))^{2n-3}([x_2,t]). \ \ \ \
    \end{split}
\end{equation}

\smallskip

Since $B$ is decomposable, we have $B\in \mathcal{L}\subset \mathcal{T}$, which implies that each summand appearing in $dB$ must contain a factor $x_2^2$. Thus, the Lie monomials in the expressions (\ref{w1}) and (\ref{w2}) with multidegree $(4n,1,1,n-6,0,0)$ are in $S_2$, which are linearly independent by Lemma \ref{lm3}. Note that the expansion of the term
$$\mu \gamma^{n-6} (\ad z)^{n-6}(\ad (\lambda_{11}x_1+\lambda_{12}x_2))^{4n+1}(y)$$
in (\ref{w2}) has the summand
$$\lambda_{11}^{4n}\lambda_{12} \mu \gamma^{n-6} (\ad z)^{n-6} (\ad x_2) (\ad x_1)^{4n}(y),$$
while no other expansions of the terms in (\ref{w1}) and (\ref{w2}) provide such a summand. It follows that its coefficient $\lambda_{11}^{4n}\lambda_{12} \mu \gamma^{n-6} $ must equal zero. From
$$\lambda_{11}\lambda_{22}\neq 0,\ \ \mu\neq 0, \ \ \gamma\neq 0,$$
we have $\lambda_{12}=0$. Recall that each summand appearing in $dB$ must contain a factor $x_2^2$. After substituting $\lambda_{12}=0$ to the expressions (\ref{w1}) and (\ref{w2}), we derive that all the Lie monomials in (\ref{w1}) and (\ref{w2}) are in $\cup_{i=1}^6S_i$, which are linearly independent by Lemma \ref{lm3}. Then by comparing the coefficients of the Lie monomials, we conclude that
\begin{equation}\label{e1}
    \begin{split}
    \zeta&=\mu \gamma^{5n-7}=\lambda_{11}^{4n+1}\mu\gamma^{n-6}=\lambda_{11}^{4n}\lambda_{22}\mu\gamma^{n-6} \\
    &= \lambda_{11}^{2n-1}\lambda_{22}\mu^{2n}\gamma^{n-5}=\lambda_{11}^{2n-3} \lambda_{22}^3\mu^{2n+1}\gamma^{n-6}
    \end{split}
\end{equation}
and $\zeta_{(v^\prime,w^\prime)}=0$ for $(v^\prime,w^\prime)\in E(\mathcal{G})$. From
$$\lambda_{11}^{4n+1}\mu\gamma^{n-6}=\lambda_{11}^{4n} \lambda_{22}\mu\gamma^{n-6},\ \ \lambda_{11}^{2n-1}\lambda_{22}\mu^{2n}\gamma^{n-5}=\lambda_{11}^{2n-3} \lambda_{22}^3\mu^{2n+1}\gamma^{n-6},$$
we have $\lambda_{11}=\lambda_{22}$, $\mu=\gamma$. Thus, (\ref{e1}) becomes
$$\zeta=\mu^{5n-6}=\lambda_{11}^{4n+1}\mu^{n-5}=\lambda_{11}^{2n}\mu^{3n-5}.$$
Therefore, we derive that
$$\mu^{5n-6}=\lambda_{11}^{4n+1}\mu^{n-5}\implies \mu^{4n-1}=\lambda_{11}^{4n+1},$$
$$\lambda_{11}^{4n+1}\mu^{n-5}=\lambda_{11}^{2n}\mu^{3n-5}\implies \lambda_{11}^{2n+1}=\mu^{2n}.$$
Thus, we can obtain $\lambda_{11}=\mu$, and 
$$\lambda_{11}^{2n+1}=\mu^{2n} \implies \lambda_{11}=1.$$
It follows that $\zeta=\lambda_{11}=\lambda_{22}=\mu=\gamma=1$, which implies $dB=0$. By Lemma \ref{lm2}\,(4), $B$ is a boundary.

For $df(b_{(v,w)})=f(db_{(v,w)})$, the following two expressions are equal:
\begin{equation}\label{ab1}
    \begin{split}
        d&f(b_{(v,w)})=\eta_{(v,w)} \Big((\ad z)^{5n-7}(y)+(\ad z)^{n-6}(\ad x_1)^{4n+1}(y) +(\ad z)^{n-6}(\ad x_1)^{4n}([x_2,y]) \\
        & +(\ad y)^{2n}(\ad z)^{n-5}(\ad x_1)^{2n-1} (x_2) +(\ad y)^{2n+1}(\ad z)^{n-6}(\ad x_1)^{2n-3}([x_2,t])\Big) \\
        &+\sum_{(v^\prime,w^\prime)\in E(\mathcal{G})} \eta_{(v,w)(v^\prime,w^\prime)}\Big( (\ad a_{v^\prime})^3(\ad x_1)^{2n-1} (\ad y)^{2n-3(k-h_{v^\prime})}(\ad z)^{n-5-3h_{v^\prime}}(x_2)  \\
        &+(\ad a_{v^\prime})(\ad a_{w^\prime})(\ad x_1)^{2n-3}(\ad y)^{2n+1-(2k-h_{v^\prime}-h_{w^\prime})}(\ad z)^{n-6-(h_{v^\prime}+h_{w^\prime})}([x_2,t]) \Big) \\
        &+ dB_{(v,w)} ,
    \end{split}
\end{equation}
\begin{equation}\label{ab2}
    \begin{split}
        f(db&_{(v,w)})=  \Big(\ad (\sum_{v^\prime \in V_{h_v}}\delta_{vv^\prime} a_{v^\prime}+A_v)\Big)^3(\ad x_1)^{2n-1} (\ad y)^{2n-3(k-h_v)}(\ad z)^{n-5-3h_v}(x_2) \\
        &+\Big(\ad (\sum_{v^\prime \in V_{h_v}}\delta_{vv^\prime} a_{v^\prime}+A_v)\Big)\Big(\ad (\sum_{w^\prime \in V_{h_w}}\delta_{ww^\prime} a_{w^\prime}+A_w)\Big) \\
        &\ \ \ \ \ \ \ \ \ \ \ \ \ \ \ \ \   \circ(\ad x_1)^{2n-3}(\ad y)^{2n+1-(2k-h_v-h_w)}(\ad z)^{n-6-(h_v+h_w)}([x_2,t]).
    \end{split}
\end{equation}

\smallskip

Similarly, since $B_{(v,w)}$ is decomposable, we have $B_{(v,w)}\in \mathcal{L}\subset \mathcal{T}$, which implies each summand in $dB_{(v,w)}$ must contain a factor $x_2^2$. Note that $A_v\in \mathbb{L}^{k-h_v,h_v}(y,z)$ by Lemma \ref{lm2}\,(3). Thus, all other terms in (\ref{ab1}) and (\ref{ab2}) have wordlength of $x_2$ at most one, which yields $dB_{(v,w)}=0$. Then by Lemma \ref{lm2}\,(4), each $B_{(v,w)}$ is a boundary. Then, only the Lie monomial $\eta_{(v,w)} (\ad z)^{5n-7}(y)$ has multidegree $(0,0,1,5n-7,0,0)$ in the expressions (\ref{ab1}) and (\ref{ab2}). It follows that $\eta_{(v,w)}=0$ by Lemma \ref{lm3}.

Recall that $\mathcal{G}$ is a connected graph with more than one vertex. For every vertex $v\in V(\mathcal{G})$, there exists an edge $(v,w)\in E(\mathcal{G})$ for some vertex $w\in V(\mathcal{G})$. Next, we will show that for any fixed vertex $v\in V(\mathcal{G})$ and fixed edge $(v,w)\in E(\mathcal{G})$, there exists a unique vertex $p\in V(\mathcal{G})$ and a unique edge $(p,q)\in E(\mathcal{G})$ satisfying:
\begin{equation}\label{xi}
    \begin{split}
        &A_v=0, \ \ \delta_{vp}=1, \ \ \delta_{vp^\prime}=0 \ \ \mathrm{for} \ p^\prime\neq p,   \\
        &\eta_{(v,w)(p,q)}=1, \ \ \eta_{(v,w)(p^\prime,q^\prime)}=0 \ \ \mathrm{for} \ (p^\prime,q^\prime)\neq (p,q).
    \end{split}
\end{equation}

Since $f$ induces an isomorphism on homology, there exists at least one nonzero coefficient among $\delta_{vv^\prime}$'s in the expression
$$f(a_v)=\sum_{v^\prime\in V_{h_v}}\delta_{vv^\prime}a_{v^\prime}+A_v, $$
denoted by $\delta_{vp}$. Then in the expression (\ref{ab2}), the expansion of the first summand
\begin{equation}\label{A}
    \Big(\ad (\sum_{v^\prime \in V_{h_v}}\delta_{vv^\prime} a_{v^\prime}+A_v)\Big)^3(\ad x_1)^{2n-1} (\ad y)^{2n-3(k-h_v)}(\ad z)^{n-5-3h_v}(x_2)
\end{equation}
provides a term
$$\delta_{vp}^2(\ad A_v)(\ad a_p)^2(\ad x_1)^{2n-1} (\ad y)^{2n-3(k-h_v)}(\ad z)^{n-5-3h_v}(x_2) $$
with multidegree $(2n-1,1,2n-2(k-h_v),n-5-2h_v,0,2)$, and there is no term in the expression (\ref{ab1}) with the same multidegree. By similar arguments as in Lemma \ref{lm3}, as an element in $\mathcal{T}$, the above term has the only summand
$$\delta_{vp}^2A_va_p^2 x_1^{2n-1} y^{2n-3(k-h_v)}z^{n-5-3h_v}x_2 $$
with $x_2$ on the rightmost side, which cannot be canceled by other terms in (\ref{ab2}). It follows that $A_v=0$. Similarly we can show that $A_w=0$. As a result, all the Lie monomials in the expressions (\ref{ab1}) and (\ref{ab2}) are in $S_5\cup S_6$, which are linearly independent by Lemma \ref{lm3}.

For the vertex $p^\prime \neq p$, the expansion of (\ref{A}) provides a term
$$\delta_{vp}^2\delta_{vp^\prime}(\ad a_p)^2(\ad a_{p^\prime})(\ad x_1)^{2n-1} (\ad y)^{2n-3(k-h_v)}(\ad z)^{n-5-3h_v}(x_2) ,$$
while no other expansions of the terms in (\ref{ab1}) and (\ref{ab2}) provide such a summand. Since $\delta_{vp}\neq 0$, we conclude that $\delta_{vp^\prime}=0$ for $p^\prime\neq p$.

Similarly, we denote by $\delta_{wq}$ the unique nonzero coefficient in the expression of $f(a_w)$. Then, (\ref{ab2}) becomes
\begin{equation}\label{ab3}
    \begin{split}
    f&(db_{(v,w)})=\delta_{vp}^3(\ad a_p)^3(\ad x_1)^{2n-1} (\ad y)^{2n-3(k-h_v)}(\ad z)^{n-5-3h_v}(x_2) \\
    &+\delta_{vp}\delta_{wq}(\ad a_p)(\ad a_q)(\ad x_1)^{2n-3}(\ad y)^{2n+1-(2k-h_v-h_w)}(\ad z)^{n-6-(h_v+h_w)}([x_2,t]).
    \end{split}
\end{equation}
By comparing the coefficients of the Lie monomials
$$(\ad a_{v^\prime})^3(\ad x_1)^{2n-1} (\ad y)^{2n-3(k-h_{v^\prime})}(\ad z)^{n-5-3h_{v^\prime}}(x_2) $$
and
$$(\ad a_{v^\prime})(\ad a_{w^\prime})(\ad x_1)^{2n-3}(\ad y)^{2n+1-(2k-h_{v^\prime}-h_{w^\prime})}(\ad z)^{n-6-(h_{v^\prime}+h_{w^\prime})}([x_2,t]) $$
for $(v^\prime,w^\prime)\in E(\mathcal{G})$ in (\ref{ab1}) and (\ref{ab3}), we conclude that
\begin{equation}\label{xi1}
    \eta_{(v,w)(p,q)}=\delta_{vp}^3=\delta_{vp}\delta_{wq}\neq 0,\ \ \eta_{(v,w)(p^\prime,q^\prime)}=0 \ \ \mathrm{for} \ (p^\prime,q^\prime)\neq (p,q).
\end{equation}
In particular, $\eta_{(v,w)(p,q)}\neq 0$ implies $(p,q)\in E(\mathcal{G})$. Since $\mathcal{G}$ is a graph (which implies that if $(v,w)\in E(\mathcal{G})$ then $(w,v)\in E(\mathcal{G})$ too), by similar arguments for the edge $(w,v)$ we deduce that
\begin{equation}\label{xi2}
    \delta_{wq}^3=\delta_{wq}\delta_{vp}.
\end{equation}
Combining (\ref{xi1}) and (\ref{xi2}) gives 
$$\eta_{(v,w)(p,q)}=\delta_{vp}=\delta_{wq}=1,$$
which proves (\ref{xi}).

By virtue of the uniqueness of $p$, we can define a morphism of graphs
\begin{equation*}
    \begin{split}
        \sigma : \mathcal{G} &\longrightarrow \mathcal{G}  \\
        v \  &\longmapsto \  p \\
        (v,w) &\longmapsto (p,q).
    \end{split}
\end{equation*}
    Our final step is to show that the above $\sigma$ is an automorphism of graphs. As $f$ is a homotopy equivalence, it induces an isomorphism on homology in degree $|a_v|$, which implies that $\sigma$ is a permutation of $V(\mathcal{G})$. As a result, $f$ is an isomorphism on $\mathcal{L}^{\mathcal{G}}_{<|b|}$. For any $(v,w)\in E(\mathcal{G})$, we claim that $(\sigma^{-1}(v),\sigma^{-1}(w))\in E(\mathcal{G})$. Indeed, since $db_{(v,w)}$ is a boundary of degree $|b|-1$, it follows that
    \begin{equation*}
    \begin{split}
    f&^{-1}(db_{(v,w)})=(\ad a_{\sigma^{-1}(v)})^3(\ad x_1)^{2n-1} (\ad y)^{2n-3(k-h_v)}(\ad z)^{n-5-3h_v}(x_2) \\
    &+(\ad a_{\sigma^{-1}(v)})(\ad a_{\sigma^{-1}(w)})(\ad x_1)^{2n-3} \\
    &\ \ \ \ \ \ \ \  \circ(\ad y)^{2n+1-(2k-h_v-h_w)}(\ad z)^{n-6-(h_v+h_w)}([x_2,t])
    \end{split}
    \end{equation*}
is also a boundary. However, the last term above only appears in the differential of $b_{(\sigma^{-1}(v),\sigma^{-1}(w))}$, and the linear independence by Lemma \ref{lm3} forces the presence of the generator $b_{(\sigma^{-1}(v),\sigma^{-1}(w))}$. Therefore, we can deduce that $(\sigma^{-1}(v),\sigma^{-1}(w))\in E(\mathcal{G})$. It follows from the definition that $\sigma\in \mathrm{aut}(\mathcal{G})$. This completes the proof of Lemma \ref{lm4}.

\end{proof}

\begin{lemma}\label{hx2}
    With notation as in Lemma \ref{hx1}. For every morphism $f$ with $[f]\in \mathcal{E}(\mathcal{L}^{\mathcal{G}})$, there exists $\sigma\in \mathrm{aut}(\mathcal{G})$ such that $f\simeq f_{\sigma}$.
\end{lemma}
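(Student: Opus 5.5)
We start from Lemma \ref{lm4}. Given $f$ with $[f]\in\mathcal{E}(\mathcal{L}^{\mathcal{G}})$, it provides $\sigma\in\mathrm{aut}(\mathcal{G})$ such that $f$ and $f_\sigma$ agree on every generator of $\mathcal{L}^{\mathcal{G},0}$ of degree $<|b|$. On the generators of degree $|b|$ they differ only by boundaries: $f(b)=f_\sigma(b)+B$ and $f(b_{(v,w)})=f_\sigma(b_{(v,w)})+B_{(v,w)}$. The plan is to build the homotopy $f\simeq f_\sigma$ degree by degree through the filtration
\[
\mathcal{L}^{\mathcal{G},0}\subset\mathcal{L}^{\mathcal{G},1}\subset\cdots\subset\mathcal{L}^{\mathcal{G},3n}=\mathcal{L}^{\mathcal{G}},
\]
using Lemma \ref{ho} at each stage. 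All of these dgl's are mild by Lemma \ref{lm2}\,(1).

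\textbf{Step 1 (the bottom stage).} Write $\mathcal{L}^{\mathcal{G},0}=\mathbb{L}(W_{\leq |b|})$, whose top generators $b,b_{(v,w)}$ sit in degree $|b|$. Lemma \ref{lm4} is exactly the hypothesis of Lemma \ref{ho} with $m_b$ and $m_{(v,w)}$ chosen so that $B=dm_b$ and $B_{(v,w)}=dm_{(v,w)}$. Hence $f|_{\mathcal{L}^{\mathcal{G},0}}\simeq f_\sigma|_{\mathcal{L}^{\mathcal{G},0}}$ via some homotopy $F_0$.

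\textbf{Step 2 (transfer the homotopy).} By the homotopy extension property (Theorem \ref{HEP}), $F_0$ extends to a homotopy $\widetilde F_0$ from $f$ to a morphism $f^{(1)}$ of all of $\mathcal{L}^{\mathcal{G}}$ with $f^{(1)}|_{\mathcal{L}^{\mathcal{G},0}}=f_\sigma|_{\mathcal{L}^{\mathcal{G},0}}$. Since homotopy is an equivalence relation on mild dgl's, it suffices to show $f^{(1)}\simeq f_\sigma$.

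\textbf{Step 3 (induction on $s$).} Suppose inductively that $f^{(s)}\simeq f$ and $f^{(s)}=f_\sigma$ on $\mathcal{L}^{\mathcal{G},s-1}$, for $1\le s\le 3n$. Take a basis element $\phi=\phi_{s,\alpha}$ of $\Phi_{|b|+s}$. Then
\[
d\big(f^{(s)}(\phi)-f_\sigma(\phi)\big)=f^{(s)}(d\phi)-f_\sigma(d\phi)=0,
\]
because $d\phi\in\mathcal{L}^{\mathcal{G},s-1}$. So the difference is a cycle of degree $|b|+s$, and by Lemma \ref{lm2}\,(4) it is a boundary $dm_\phi$ in $\mathcal{L}^{\mathcal{G}}$.

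Now view $\mathcal{L}^{\mathcal{G},s}$ as $\mathbb{L}(W_{\le|b|+s})$ with top degree $|b|+s$; this works since all generators added so far have degree $\le |b|+s$. Lemma \ref{ho} then gives $f^{(s)}|_{\mathcal{L}^{\mathcal{G},s}}\simeq f_\sigma|_{\mathcal{L}^{\mathcal{G},s}}$. Applying Theorem \ref{HEP} again extends this homotopy to one from $f^{(s)}$ to a morphism $f^{(s+1)}$ that agrees with $f_\sigma$ on $\mathcal{L}^{\mathcal{G},s}$. After $s=3n$ we obtain $f\simeq f^{(3n+1)}=f_\sigma$.

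\textbf{Main obstacle: applying Lemma \ref{ho} at each stage.} Lemma \ref{ho} requires the two morphisms to agree \emph{exactly} on all generators below the top degree. At stage $s$ the lower generators are those of $\mathcal{L}^{\mathcal{G},s-1}$, on which $f^{(s)}=f_\sigma$ by construction. So the only real issue is bookkeeping: the generating set of $\mathcal{L}^{\mathcal{G},s}$ must be graded so that $\Phi_{|b|+s}$ is precisely the top-degree part. This holds because every other generator lies in degree $<|b|+s$.

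We also need Lemma \ref{lm2}\,(4) for the degrees $|b|,\dots,|b|+3n$, and this is exactly the range it covers. The boundaries $dm_\phi$ are allowed to lie in all of $\mathcal{L}^{\mathcal{G}}$, which is the target of the morphisms, as Lemma \ref{ho} permits.
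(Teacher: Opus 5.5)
Your proposal is correct and follows the paper's proof essentially step for step. You apply Lemma \ref{lm4} and Lemma \ref{ho} on $\mathcal{L}^{\mathcal{G},0}$, then the homotopy extension property, then induct over the stages $\mathcal{L}^{\mathcal{G},s}$, using Lemma \ref{lm2}\,(4) to see that the discrepancy on each $\phi_{s,\alpha}$ is a boundary, until the final morphism equals $f_\sigma$; apart from your index shift ($f^{(s+1)}$ versus the paper's $f_s$), this is the paper's argument.
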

\begin{proof}
In Lemma \ref{lm4}, we have concluded that
\begin{align*}
    f|_\mathcal{L}&=f_{\sigma}|_\mathcal{L}, \\
    f(b)&=f_{\sigma}(b)+B, \\
    f(b_{(v,w)})&=f_{\sigma}(b_{(v,w)})+B_{(v,w)}, \ \  (v,w)\in E(\mathcal{G}),
\end{align*}
where $B$ and $B_{(v,w)}$'s are boundaries in $ \mathcal{L}^{\mathcal{G}}$. By Lemma \ref{ho}, we have
$$f|_{\mathcal{L}^{\mathcal{G},0}}\simeq g_{\sigma}= f_{\sigma}|_{\mathcal{L}^{\mathcal{G},0}},$$
By Theorem \ref{HEP}, we deduce that $f$ is homotopic to a morphism, denoted by $f_0$, whose restriction on $\mathcal{L}^{\mathcal{G},0}$ is $g_{\sigma}$.

We next compare the restriction of $f_0$ on $\mathcal{L}^{\mathcal{G},1}$ and $g_{\sigma,1}=f_{\sigma}|_{\mathcal{L}^{\mathcal{G},1}}$. Recall that $\{\phi_{1,\alpha}\}_{\alpha\in I_1}$ is a free basis for $\Phi_{|b|+1}$. For $\alpha\in I_1$, we have
$$d\left(f_0(\phi_{1,\alpha})-g_{\sigma,1}(\phi_{1,\alpha})\right)=f_0(d\phi_{1,\alpha})-g_{\sigma,1}(d\phi_{1,\alpha})=0,$$
since $d\phi_{1,\alpha}\in \mathcal{L}^{\mathcal{G},0} $, and $f_0,g_{\sigma,1}$ coincide on $\mathcal{L}^{\mathcal{G},0}$. By Lemma \ref{lm2}\,(4), it follows that $f_0(\phi_{1,\alpha})-g_{\sigma,1}(\phi_{1,\alpha})$ is a boundary. Applying Lemma \ref{ho} and Theorem \ref{HEP} again yields that
$$f_0|_{\mathcal{L}^{\mathcal{G},1}}\simeq g_{\sigma,1} =f_{\sigma} |_{\mathcal{L}^{\mathcal{G},1}},$$
and $f_0$ is homotopic to a morphism, denoted by $f_1$, whose restriction on $\mathcal{L}^{\mathcal{G},1}$ is $g_{\sigma,1}$.

Iterating the foregoing arguments, we obtain morphisms $f_1,f_2,\dots,f_{3n}$ such that
$$f_s|_{\mathcal{L}^{\mathcal{G},s}}=g_{\sigma,s}$$
for $s=1,2,\dots,3n$, and
$$f\simeq f_0\simeq f_1\simeq \dots \simeq f_{3n}. $$
Note that $f_{3n}$ and $f_{\sigma}$ coincide on $\mathcal{L}^{\mathcal{G},3n}=\mathcal{L}^{\mathcal{G}}$. Thus, we finally conclude that $f_{3n}= f_{\sigma}$, and hence $f\simeq f_{\sigma}$.

\end{proof}

\begin{lemma}\label{lm5}
    With notation as in Lemma \ref{hx1}. Define a map by setting
    \begin{equation*}
        \begin{split}
            \theta_{\mathcal{G}}: \mathrm{aut}(\mathcal{G})&\longrightarrow \mathcal{E}(\mathcal{L}^{\mathcal{G}}) \\
            \sigma\ \ &\longmapsto \ \  [f_{\sigma}].
        \end{split}
    \end{equation*}
    Then $\theta_{\mathcal{G}}$ is an isomorphism of groups.
\end{lemma}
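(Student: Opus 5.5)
The proof breaks into three steps: show that $\theta_{\mathcal{G}}$ is well defined and a homomorphism, then get surjectivity from Lemma \ref{hx2}, then prove injectivity.

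\textbf{Homomorphism.} By Lemma \ref{hx1} each $f_\sigma$ is an automorphism of $\mathcal{L}^{\mathcal{G}}$, hence a quasi-isomorphism, so $[f_\sigma]\in\mathcal{E}(\mathcal{L}^{\mathcal{G}})$. To see that $[f_{\sigma\tau}]=[f_\sigma][f_\tau]$, compare $f_{\sigma\tau}$ with $f_\sigma\circ f_\tau$.
\begin{itemize}
\item On the generators $x_1,x_2,y,z,t,a_v,b,b_{(v,w)}$ the two maps agree exactly, since $g_{\sigma\tau}=g_\sigma\circ g_\tau$ by definition.
\item Both maps are therefore morphisms of $\mathcal{L}^{\mathcal{G}}$ whose restrictions to $\mathcal{L}^{\mathcal{G},0}$ coincide.
\item At each inductive level $s$, the images of $\phi_{s,\alpha}$ under the two maps are elements of $\Phi_{|b|+s}$ with the same differential. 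The composite maps into $\Phi$ because $\Phi_{|b|+s}$ lies in the indecomposables, and in these degrees the relevant part of $f_\tau(\phi_{s,\alpha})$ is a generator term.
\item Since $d$ restricted to $\Phi_{|b|+s}$ is injective (it is the isomorphism $s^{-1}$ onto cycles), the two images coincide.
\end{itemize}
Even without this, one can just cite the argument of Lemma \ref{hx2}: any two morphisms agreeing on $\mathcal{L}^{\mathcal{G},0}$ are homotopic, by iterating Lemma \ref{ho}, Theorem \ref{HEP} and Lemma \ref{lm2}\,(4). Either way $\theta_{\mathcal{G}}(\sigma\tau)=\theta_{\mathcal{G}}(\sigma)\theta_{\mathcal{G}}(\tau)$.

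\textbf{Surjectivity.} This is exactly Lemma \ref{hx2}: every class $[f]\in\mathcal{E}(\mathcal{L}^{\mathcal{G}})$ equals $[f_\sigma]$ for some $\sigma\in\mathrm{aut}(\mathcal{G})$.

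\textbf{Injectivity.} This is the main point. Suppose $f_\sigma\simeq f_{\mathrm{id}}=\mathrm{id}$; we must show $\sigma=\mathrm{id}$.
\begin{itemize}
\item Homotopic morphisms induce the same map on homology, so $f_\sigma$ acts trivially on $H_{|a_v|}(\mathcal{L}^{\mathcal{G}})$.
\item By Lemma \ref{lm2}\,(3), $\mathcal{L}^{\mathcal{G}}_{|a_v|}=\langle a_u\mid h_u=h_v\rangle\oplus\mathbb{L}^{k-h_v,h_v}(y,z)$. All these elements are cycles.
\item Boundaries in degree $|a_v|$ are differentials of elements of degree $|a_v|+1$. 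By degree reasons these lie in $\mathbb{L}(x_1,x_2,y,z,t)$, where the only nontrivial differential is $dt=[x_2,x_2]$. Such boundaries are multiples of $x_2^2$-type expressions, which cannot occur in the degree $(k-h_v)|y|+h_v|z|$ since that has no $x$-wordlength.
\item So boundaries vanish in this degree, and the classes $[a_u]$ are part of a free basis of $H_{|a_v|}$, independent modulo $\mathbb{L}^{k-h_v,h_v}(y,z)$.
\item Since $f_\sigma(a_v)=a_{\sigma(v)}$, triviality on homology forces $a_{\sigma(v)}=a_v$, i.e. $\sigma(v)=v$ for all $v$.
\end{itemize}
Because $\mathcal{G}$ is simple, an automorphism is determined by its action on vertices, so $\sigma=\mathrm{id}$.

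The main obstacle I expect is this degree check that no boundaries occur in degree $|a_v|$, using Lemma \ref{lm2}\,(3) and the degree inequalities, together with the naturality of the composition identity across the inductively added generators $\Phi_{|b|+s}$. For the latter I would fall back on the homotopy argument of Lemma \ref{hx2}.
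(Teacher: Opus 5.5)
Your proposal is correct and follows the paper's proof: surjectivity comes from Lemma \ref{hx2}, and injectivity comes from reading off $\sigma$ from the action on $H_{|a_v|}(\mathcal{L}^{\mathcal{G}})$ via Lemma \ref{lm2}\,(3); you also fill in details the paper leaves implicit, namely $f_{\sigma\tau}=f_\sigma\circ f_\tau$ and the absence of boundaries in degree $|a_v|$. One small slip: degree $|a_v|+1$ also contains the generators $a_u$ with $h_u=h_v+1$, so not everything in that degree lies in $\mathbb{L}(x_1,x_2,y,z,t)$, but since $da_u=0$ your conclusion is unaffected.
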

\begin{proof}
    It is easy to verify that $\theta_{\mathcal{G}}$ is a group morphism, and Lemma \ref{hx2} implies $\theta_{\mathcal{G}}$ is surjective.

    To show that $\theta_{\mathcal{G}}$ is injective, suppose that $[f_{\sigma}]=[f_{\sigma^\prime}] $. Thus, we obtain  $H(f_{\sigma})=H(f_{\sigma^\prime})$. For $i=0,1,\dots, k$, we have
    $$H_{(k-i)|y|+i|z|}(\mathcal{L}^{\mathcal{G}})=\langle a_{v}  |\   v\in V_i \  \rangle \oplus \mathbb{L}^{k-i,i}(y,z).$$
    It follows that
    $$a_{\sigma(v)}=H(f_{\sigma})(a_v)=H(f_{\sigma^\prime})(a_v) =a_{\sigma^\prime(v)} $$
    for $v\in V_i$ and $i=0,1,\dots,k$. Since $V(\mathcal{G})= \cup_{i=0}^kV_i $, we conclude that $\sigma(v)=\sigma^\prime(v)$ for $v\in V(\mathcal{G})$, which implies $\sigma=\sigma^\prime$. This completes the proof of Lemma \ref{lm5}.

\end{proof}

Finally, we are in a position to prove the main results.

\begin{proof}[Proof of Theorem \ref{fi}]
    Set $r=8n^2-6n+1$ and $m=|b|+3n$. Recall that
    $$\rho(R) > 20n^3-33n^2+8n+4, $$
    and $\mathcal{L}^{\mathcal{G}}$ is $r$-mild by Lemma \ref{lm2}\,(1). By direct computation, we have
    $$\min\{ r+2 \rho(R)-3,r\rho(R)-1\}=r+2 \rho(R)-3>40n^3-58n^2+10n+6=m. $$
    According to \cite[Theorem 1]{AN2}, there exists an $r$-connected finite-dimensional CW-complex $X$, which satisfies $\mathcal{E}(X_{R})\cong \mathcal{E}(\mathcal{L}^{\mathcal{G}})\cong \mathrm{aut}(\mathcal{G}) \cong G$.

    Using the natural identification of the linearized chain complex $\mathcal{L}^{\mathcal{G}}/[\mathcal{L}^{\mathcal{G}},\mathcal{L}^{\mathcal{G}}]$ with the desuspension of the reduced cellular chain complex for $X_R$ \cite[Theorem 8.5]{AN1}, we can identify $\langle a_{v} | v\in V_i \rangle \cong \langle V_i \rangle \cong M_i$ with
    $$s^{-1}H_{(k-i)|y|+i|z|+1}(X_{R};\mathbb{Z})\cong H_{k(8n^2-2n-2)+i+1}(X_{R};\mathbb{Z}) $$
    for $i=1,2,\dots, k$. Their group actions are also equivalent due to the natural identification.

    In particular, there is a one-to-one correspondence between the generators of $\mathcal{L}^{\mathcal{G}}$ and the cells of $X$ except the $0$-cell. Consequently, when $G$ is a finite group and $\left\{M_i\right\}_{i=1}^{k}$ is a collection of finitely generated $RG$-modules, $\mathcal{G}$ can be chosen to be a finite graph, which implies $X$ is a finite CW-complex. This completes the proof of Theorem \ref{fi}.

\end{proof}

\begin{proof}[Proof of Corollary \ref{co1}]
    Set $k=2$. Choose an integer $n$ satisfying 
    $$n\geq11, \ \ 8n^2-6n+1\geq N, $$
    and put $P_{G,N}=20n^3-33n^2+8n+4$. For every subring $R\subset \Q$ with $\rho(R)>P_{G,N}$, we can apply Theorem \ref{fi} to
    $$(G,\{M_i\}_{i=1}^2)= (G, \{R,R\}),$$
    and obtain an $(8n^2-6n+1)$-connected finite-dimensional CW-complex $X$ such that $\mathcal{E}(X_{R})\cong G$. Since $8n^2-6n+1\geq N$, $X$ is also $N$-connected. When $G$ is finite, the two modules $R$ are finitely generated. Thus the final assertion of Theorem \ref{fi} gives a finite CW-complex $X$.

\end{proof}

\end{document}